\providecommand{\U}[1]{\protect\rule{.1in}{.1in}}
\newtheorem{theorem}{Theorem}
\newtheorem{corollary}[theorem]{Corollary}
\newtheorem{example}[theorem]{Example}
\newtheorem{lemma}[theorem]{Lemma}
\newtheorem{proposition}[theorem]{Proposition}
\newtheorem{remark}[theorem]{Remark}
\newenvironment{proof}[1][Proof]{\textbf{#1.} }{\ \rule{0.5em}{0.5em}}
\begin{document}
\bigskip{\LARGE Invariant subspaces of algebras of analytic elements
associated with periodic\ flows on W*-algebras}

\bigskip

\ \ \ \ \ \ \ \ \ \ \ \ \ \ \ \ \ \ \ \ \ \ \ \ \ \ \ \ \ \ \ \ \ \ \ \ \ \ \ \ Costel
Peligrad\footnote{Supported in part by a Taft Research Grant}

\bigskip

Department of Mathematical Sciences, University of Cincinnati, PO Box 210025,
Cincinnati, OH 45221-0025, USA. E-mail address: peligrc@ucmail.uc.edu

\bigskip

Key words and phrases. W*-dynamical system, invariant subspaces, analytic
elements, reflexive algebra

\bigskip

2013 Mathematics Subject Classification. Primary 46L10, 46L40, 47L75;
Secondary 30H10, 47B35.

\bigskip

ABSTRACT. We consider an action of the circle group, $\boldsymbol{T}$ on a
$\sigma-$finite\ W*- algebra, $M.$ Similarly to the case when $M=L^{\infty
}(\boldsymbol{T})$ is acted upon by translations, we define the generalized
Hardy space $H_{+}\subset H$ where $H$\ is the Hilbert space of a standard
representation of $M$\ and the subalgebra $M_{+}$ of analytic elements of $M$
with respect to the action$.$ We prove that $M_{+}\subset B(H_{+})$ is a
reflexive algebra of operators if the Arveson spectrum is finite or, if the
spectrum is infinite, the spectral subspace corresponding to the least
positive element contains an unitary operator. We also prove that $M_{+}$\ is
reflexive if $M$\ is an abelian W*-algebra. Examples include the algebra of
analytic Toeplitz operators, $w^{\ast}$-crossed products, reduced $w^{\ast}%
$-semicrossed products and some reflexive nest subalgebras of von Neumann algebras.

\bigskip

\section{\bigskip Introduction}

Let $A\subset B(X)$ denote a weakly closed algebra of operators on the Hilbert
space $X$. Denote by $Lat(A)$ the lattice of closed subspaces of $X$\ that are
invariant for all operators $a\in A.$ Let $algLat(A)=\left\{  b\in
B(X)|bK\subset K\text{ for all }K\in Lat(A)\right\}  .$ The algebra \ is
called reflexive if $A=algLat(A)$ (for more information on reflexive operator
algebras, we recommend the book by H. Radjavi and P. Rosenthal [7]). Thus, a
reflexive operator algebra is completely determined by the lattice of its
invariant subspaces.The problem of whether a weakly closed algebra of
operators is reflexive started to be studied in the 1960s. In particular,
Sarason [8], proved that that the algebra of analytic Toeplitz operators on
the Hardy space $H^{2}(\boldsymbol{T})$ where $\boldsymbol{T}$\ is the the
unit circle $\boldsymbol{T}=\left\{  z\in%
\mathbb{C}
|\left\vert z\right\vert =1\right\}  ,$ is reflexive and so is any of its
weakly closed subalgebras. In [6] we extended this result to the case of
$H^{p}(\boldsymbol{T}),1<p\leqq\infty.$ The same paper contains a study of
reflexivity of subalgebras of analytic elements of crossed products of finite
von Neumann algebras on non commutative Hardy spaces $H^{p}$ associated with
the W*-dynamical system. Further, in [3], E. Kakariadis has considered the
more general case of semi crossed products and, among other results, he has
extended the particular case of our main result in [6] for $p=2$\ to the
semicrossed product setting. In this paper we study a related problem in a
more general setting than the crossed product or the reduced $w^{\ast}%
$semicrossed product considered in [6] and [3]. We consider a W*-dynamical
system $(M,\boldsymbol{T,}\alpha)$ where $M$\ is a $\sigma-$ finite W*-algebra
and a standard covariant representation of the system on a certain Hilbert
space, $H$, as constructed below. We then define the non commutative Hardy
space, $H_{+}$ associated with this representation. We show that if the
Arveson spectrum of the action is finite, then the algebra of analytic
elements, $M_{+}$, is a reflexive operator algebra on $H_{+}$ (Theorem 5). We
consider next the case of infinite Arveson spectrum and show that if the
spectral subspace corresponding to the smallest positive element of the
spectrum contains a unitary element, then, again, the algebra $M_{+}$ is
reflexive (Theorem 8). Finally, we consider the case of abelian von Neumann
algebras and we prove that in this case $M_{+}$ is always reflexive (Theorem
13). Examples and particular cases of our results include the classical result
of Sarason, the crossed product [6], the reduced $w^{\ast}-$semicrossed
product [3], the algebra of $n$ x $n$ upper triangular matrices and other nest
subalgebras of von Neumann algebras.

\section{\bigskip Notations and preliminary results}

Let $(M,\boldsymbol{T,}\alpha)$ be a W*-dynamical system, where $M$ is a
$W^{\ast}-$algebra with separable predual, $\boldsymbol{T}$\ is the circle
group, $\boldsymbol{T}=\left\{  z\in%
\mathbb{C}
|\left\vert z\right\vert =1\right\}  $\ and $\alpha\ $a $w^{\ast}-$continuous
action of $\boldsymbol{T}$\ on $M.$ For each $n\in%
\mathbb{Z}
,$\ denote by
\[
M_{n}=\left\{
{\displaystyle\int}
\overline{z}^{n}\alpha_{z}(m)dz|m\in M\right\}
\]
where the integral is taken in the $w^{\ast}-$topology\ Therefore,%
\[
M_{-n}=\left\{  m^{\ast}|m\in M_{n}\right\}
\]
It can immediately be checked that
\[
M_{n}=\left\{  m\in M|\alpha_{z}(m)=z^{n}m\right\}
\]
and%
\[
M_{-n}=(M_{n})^{\ast},\text{ where }(M_{n})^{\ast}=\left\{  m^{\ast}|m\in
M_{n}\right\}
\]
\ In particular, $M_{0}$\ is the algebra of the fixed points of $M$\ under the
action $\alpha.$ It is clear that the mapping $P_{n}^{M}:M\rightarrow M_{n}$
defined by $P_{n}^{M}(m)=%
{\displaystyle\int}
\overline{z}^{n}\alpha_{z}(m)dz$ is a projection of $M$\ onto the closed
subspace $M_{n}\subset M.$ Thus $P_{0}^{M}:M\rightarrow M_{0}$ is a faithful
normal conditional expectation of $M$\ onto the subalgebra $M^{\alpha}.$ It is
well known that $M$\ is the $w^{\ast}$-closed linear span of $\left\{
M_{n}|n\in%
\mathbb{Z}
\right\}  .$The set $\left\{  n\in%
\mathbb{Z}
|M_{n}\neq\left\{  0\right\}  \right\}  $ is called the Arveson spectrum of
the action $\alpha$ and will be denoted by $sp(\alpha)$ (for more information
about the Arveson spectrum of an action, see [1], [5])$.$ Since $M$ is
$\sigma-$ finite, it follows, in particular, that $M_{0}$\ has a faithful
normal state, $\varphi_{0}.$ This is the case, in particular, when $M_{0}$ has
separable predual.Then, $\varphi=\varphi_{0}\circ P_{0}^{M}$ is a faithful
normal $\alpha$-invariant state of $M$. Let $(H_{\varphi},\pi_{\varphi}%
,\xi_{\varphi})$ be the GNS representation of $M$\ corresponding to the state
$\varphi.$ Clearly, $\pi_{\varphi}$ is a faithful representation of $M$\ on
$H_{\varphi}$\ and $\xi_{\varphi}$\ is a cyclic and separating vector for $M.$
In the rest of the paper we will identify $\pi_{\varphi}(M)$ with $M$ and will
write $m$\ instead of $\pi_{\varphi}(m),m\in M$. Also, we will denote
$H_{\varphi}$\ by $H$\ and $\xi_{\varphi}$\ by $\xi_{0}.$ In other words, we
will consider a von Neumann algebra, $M\subset B(H),$ that has a cyclic and
separating unit vector $\xi_{0}$ and the vector state $\varphi(m)=\left\langle
m\xi_{0},\xi_{0}\right\rangle $ is $\alpha-$invariant. If we define
$U_{z}(m\xi_{0})=\alpha_{z}(m)\xi_{0},$ for all $z\in\boldsymbol{T},$ then it
is easy to see that $\left\{  U_{z}\right\}  _{z\in\boldsymbol{T}}$ is a group
of unitary operators on $H$ that implements the action $\alpha.$

If $S$\ and $F$\ are the closures of the Tomita-Takesaki conjugate linear
densely defined operators:%
\begin{align*}
S_{0}(m\xi_{0})  &  =m^{\ast}\xi_{0},m\in M\\
F_{0}(m^{\prime}\xi_{0})  &  =(m^{\prime})^{\ast}\xi_{0},m^{\prime}\in
M^{\prime}%
\end{align*}
then, it is known (see for instance [2], Section 9.2) that $S^{\ast}=F$, the
positive self adjoint operator $\Delta=S^{\ast}S$ has an inverse $\Delta
^{-1}=SS^{\ast}$ and there is a conjugate linear isometry $J$ from $H$\ onto
$H$\ such that%
\[
S=J\Delta^{\frac{1}{2}}%
\]

From the definition of the unitaries $U_{z},z\in\boldsymbol{T,}$ it
immediately follows that all $U_{z},z\in\boldsymbol{T,}$ commute with $S$ and
therefore with $S^{\ast}=F.$ Indeed%
\[
U_{z}S(m\xi_{0})=U_{z}(m^{\ast}\xi_{0})=\alpha_{z}(m^{\ast})\xi_{0}%
=SU_{z}(m\xi_{0})
\]
and therefore, $U_{z}S=SU_{z},z\in\boldsymbol{T.}$ Thus, all $U_{z}%
,z\in\boldsymbol{T}$ commute with $J$ as well.

We can now state the following

\begin{proposition}
With the notations above, we have the following:\newline i) $\alpha
_{z}^{\prime}(m^{\prime})=U_{z}m^{\prime}U_{z}^{\ast}$ is an action of
$\boldsymbol{T}$\ on $M^{\prime},$ where $M^{\prime}$\ is the commutant of
$M$\ in B(H)\newline ii) If $z\in\boldsymbol{T,}$ then$\ U_{z}$\ commutes with
$J$, and $J\alpha_{z}(m)J=\alpha_{z}^{\prime}(JmJ),m\in M,z\in\boldsymbol{T}%
$\newline iii) ($M^{\prime})_{n}=JM_{-n}J,n\in%
\mathbb{Z}
$\newline iv) $U_{z}(m^{\prime}\xi_{0})=\alpha_{z}^{\prime}(m^{\prime})\xi
_{0}$\newline v) $sp(\alpha)=sp(\alpha^{\prime})$
\end{proposition}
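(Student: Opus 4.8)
The plan is to prove the five assertions more or less in the order stated, since each uses the previous ones. For (i), I would simply observe that $z\mapsto U_z$ is a (strongly continuous) unitary representation of $\boldsymbol{T}$, so $\alpha'_z(m')=U_z m' U_z^*$ is automatically a group action by $*$-automorphisms of $B(H)$; the only point to check is that it preserves $M'$, which follows because $U_z$ implements $\alpha$ on $M$ (so $U_z M U_z^*=M$), hence $U_z M' U_z^*=M'$ as well. Continuity in the $w^*$-topology on $M'$ is inherited from strong continuity of $\{U_z\}$.

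For (ii), the commutation of $U_z$ with $J$ has already been derived in the text preceding the Proposition (from $U_z S=SU_z$ and $S=J\Delta^{1/2}$, noting $U_z$ also commutes with $\Delta$ and hence with $\Delta^{1/2}$); I would just cite that. The identity $J\alpha_z(m)J=\alpha'_z(JmJ)$ then follows by a direct computation: $J\alpha_z(m)J=J U_z m U_z^* J=U_z (JmJ) U_z^*=\alpha'_z(JmJ)$, using $U_z J=J U_z$ in the middle step. For (iv), I would verify it first on the dense set $M'\xi_0$ using a Tomita–Takesaki-style argument: since $\xi_0$ is cyclic and separating and $U_z\xi_0=\xi_0$ (because $\varphi$ is $\alpha$-invariant, $U_z(1\cdot\xi_0)=\alpha_z(1)\xi_0=\xi_0$), we get $U_z(m'\xi_0)=U_z m' U_z^*\xi_0=\alpha'_z(m')\xi_0$ directly — in fact this is immediate and needs no density argument.

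For (iii), I would use (ii) together with the characterization $M_n=\{m\in M\mid \alpha_z(m)=z^n m\}$ already established in the text. If $m\in M_{-n}$, then $\alpha'_z(JmJ)=J\alpha_z(m)J=J(z^{-n}m)J=\overline{z^{-n}}\,JmJ=z^n JmJ$ (the conjugate-linearity of $J$ flips the scalar), so $JmJ\in (M')_n$; this gives $JM_{-n}J\subset (M')_n$. The reverse inclusion follows by applying the same reasoning to $J(M')_n J$ using $J^2=\mathrm{id}$ and $JM'J=M$, or alternatively by a dimension/projection-completeness argument since $J(\cdot)J$ is a bijection of $M$ onto $M'$ carrying the $\alpha$-spectral decomposition to the $\alpha'$-spectral decomposition. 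Finally (v) is an immediate corollary of (iii): $M_n\neq\{0\}$ iff $JM_{-n}J\neq\{0\}$ iff $(M')_{-n}\neq\{0\}$, and since $sp(\alpha)$ is symmetric under $n\mapsto -n$ (because $M_{-n}=(M_n)^*$), we get $n\in sp(\alpha)\iff -n\in sp(\alpha)\iff (M')_n\neq\{0\}\iff n\in sp(\alpha')$.

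I do not expect any serious obstacle here; this is a preparatory lemma and every step is a short Tomita–Takesaki computation. The one place to be careful is the conjugate-linearity of $J$ in part (iii): one must track that $J$ sends the eigenvalue $z^n$ to $\overline{z^n}=z^{-n}$, so that $M_{-n}$ (not $M_n$) maps into $(M')_n$ — getting the sign right is the only thing that could trip up a hasty reader. Everything else is bookkeeping with the relations $U_z J=J U_z$, $U_z\xi_0=\xi_0$, $J M J=M'$, and the eigenspace description of the spectral subspaces.
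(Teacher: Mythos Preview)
Your proposal is correct and follows essentially the same route as the paper's proof: direct verification using the relations $U_zJ=JU_z$, $JMJ=M'$, and the eigenspace description of the spectral subspaces. Your arguments for (i) (using $U_zMU_z^*=M\Rightarrow U_zM'U_z^*=M'$) and for (iv) (using $U_z\xi_0=\xi_0$ directly) are slightly slicker than the paper's explicit computations, but not genuinely different in spirit.
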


\begin{proof}
i) Indeed, if $m\in M$, $m^{\prime}\in M^{\prime}$ and $a\in M,$ we have:%
\[
U_{z}m^{\prime}U_{z}^{\ast}m(a\xi_{0})=U_{z}m^{\prime}U_{z}^{\ast}%
mU_{z}U_{\overline{z}}(a\xi_{0})==U_{z}m^{\prime}\alpha_{\overline{z}%
}(m)(\alpha_{\overline{z}}(a)\xi_{0})=
\]%
\[
U_{z}\alpha_{\overline{z}}(m)m^{\prime}(\alpha_{\overline{z}}(a)\xi
_{0})=v)sp(\alpha)=sp(\alpha^{\prime})=
\]%
\[
=mU_{z}m^{\prime}U_{\overline{z}}U_{z}(\alpha_{\overline{z}}(a)\xi_{0}%
)=mU_{z}m^{\prime}U_{z}^{\ast}(a\xi_{0})
\]
\newline Hence $\alpha_{z}^{\prime}(m^{\prime})=U_{z}m^{\prime}U_{z}^{\ast}\in
M^{\prime}.$\newline ii) The fact that the unitaries $U_{z}$\ commute with
$J$\ was proved above. For the second part of ii), notice that%
\[
J\alpha_{z}(m)J=JU_{z}mU_{z}^{\ast}J=U_{z}JmJU_{z}^{\ast}%
\]
iii) follows immediately from i) and ii).\newline iv) Let $m^{\prime}\in
M^{\prime}.$ Then, $m^{\prime}=JmJ$\ for some $m\in M.$ Then we have
\begin{align*}
U_{z}(m^{\prime}\xi_{0})  &  =U_{z}(JmJ\xi_{0})=JU_{z}(m\xi_{0})=\\
&  =J(\alpha_{z}(m)\xi_{0})=\alpha_{z}^{\prime}(m^{\prime})\xi_{0}%
\end{align*}
\newline v) Let $n\in sp(\alpha).$\ Then, since $M_{-n}=\left\{  m^{\ast}|m\in
M_{n}\right\}  $\ we have $-n\in sp(\alpha)$\ .From iii) it then follows that
$n\in sp(\alpha^{\prime}).$ The proof of the other inclusion is similar
\end{proof}

Let $H_{n}=\left\{
{\displaystyle\int}
\overline{z}^{n}U_{z}(\xi)dz|\xi\in H\right\}  =\left\{  \xi\in H|U_{z}%
\xi=z^{n}\xi\right\}  .$ Then, the map $P_{n}^{H}$ from $H$ to $H_{n}$ defined
as follows%
\[
P_{n}^{H}(\xi)=%
{\displaystyle\int}
\overline{z}^{n}U_{z}(\xi)dz|n\in%
\mathbb{Z}
,\xi\in H
\]
is an orthogonal projection of $H$\ onto the closed supspace $H_{n}.$ Also the
closed subspaces $(M^{\prime})_{n}\subset M^{\prime}$\ and the projections
$P_{n}^{M^{\prime}}$ from $M^{\prime}$\ onto $(M^{\prime})_{n}$\ can be
defined similarly with $M_{n}$\ and $P_{n}^{M}.$

\begin{proposition}
i) If $n\neq k,$ then $H_{n}$ and $H_{k}$ are orthogonal\newline ii) For every
$n\in%
\mathbb{Z}
$ we have $\overline{M_{n}\xi_{0}}=H_{n},$ where $M_{n}\xi_{0}=\left\{
m\xi_{0}|m\in M_{n}\right\}  $\newline iii) For every $n\in%
\mathbb{Z}
$ we have $\overline{(M^{\prime})_{n}\xi_{0}}=H_{n},$ where ($M^{\prime}%
)_{n}\xi_{0}=\left\{  m^{\prime}\xi_{0}|m^{\prime}\in(M^{\prime})_{n}\right\}
$\newline iv) The direct sum of Hilbert spaces $%
{\displaystyle\sum}
H_{n}$ equals $H$
\end{proposition}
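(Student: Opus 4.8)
The plan is to prove the four parts in order, the technical heart being an intertwining identity $P_n^{H}(m\xi_0)=P_n^{M}(m)\xi_0$ that links the spectral projection on $H$ with the one on $M$; granting that identity, everything else follows from the cyclicity of $\xi_0$. For part i), if $\xi\in H_n$ and $\eta\in H_k$ then unitarity of the $U_z$ gives $\langle\xi,\eta\rangle=\langle U_z\xi,U_z\eta\rangle=z^{\,n}\overline{z}^{\,k}\langle\xi,\eta\rangle=z^{\,n-k}\langle\xi,\eta\rangle$ for every $z\in\boldsymbol{T}$; since $n\neq k$ one may choose $z$ with $z^{\,n-k}\neq1$, forcing $\langle\xi,\eta\rangle=0$.

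For part ii), the inclusion $\overline{M_n\xi_0}\subseteq H_n$ is immediate from $M_n=\{m:\alpha_z(m)=z^{n}m\}$: for such $m$ one has $U_z(m\xi_0)=\alpha_z(m)\xi_0=z^{n}m\xi_0$, so $m\xi_0\in H_n$, and $H_n$ is closed. For the reverse inclusion I would first establish $P_n^{H}(m\xi_0)=P_n^{M}(m)\xi_0$ for all $m\in M$, by testing against an arbitrary $\eta\in H$: the vector functional $\omega(x)=\langle x\xi_0,\eta\rangle$ is normal on $M$, hence commutes with the $w^{\ast}$-valued integral defining $P_n^{M}$, so $\langle P_n^{M}(m)\xi_0,\eta\rangle=\int\overline{z}^{\,n}\langle\alpha_z(m)\xi_0,\eta\rangle\,dz=\int\overline{z}^{\,n}\langle U_z(m\xi_0),\eta\rangle\,dz=\langle P_n^{H}(m\xi_0),\eta\rangle$. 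Now take $\xi\in H_n$; by cyclicity there are $m_j\in M$ with $m_j\xi_0\to\xi$, and applying the bounded operator $P_n^{H}$ together with the identity gives $P_n^{M}(m_j)\xi_0=P_n^{H}(m_j\xi_0)\to P_n^{H}\xi=\xi$ with $P_n^{M}(m_j)\in M_n$, so $\xi\in\overline{M_n\xi_0}$.

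Part iii) is the same argument carried out on the commutant: $\xi_0$ is separating for $M$, hence cyclic for $M'$, and by Proposition 1 iv) the same unitaries satisfy $U_z(m'\xi_0)=\alpha'_z(m')\xi_0$, so the computation above yields $P_n^{H}(m'\xi_0)=P_n^{M'}(m')\xi_0$ and then $\overline{(M')_n\xi_0}=H_n$. For part iv), part i) shows the $H_n$ are mutually orthogonal, so $\bigoplus_n H_n$ is a closed subspace of $H$; by part ii) it contains every $H_n=\overline{M_n\xi_0}$, hence the norm-closed linear span $\mathcal{K}$ of $\bigcup_n M_n\xi_0$. Since $M$ is the $w^{\ast}$-closed linear span of $\{M_n:n\in\mathbb{Z}\}$, every $m\in M$ is a $w^{\ast}$-limit of elements of $\mathrm{span}\{M_n\}$, so — vector functionals being normal — $m\xi_0$ lies in the weak closure of $\mathrm{span}\bigcup_n M_n\xi_0$, which equals the norm closure $\mathcal{K}$ because $\mathcal{K}$ is convex (Mazur's theorem). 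Thus $M\xi_0\subseteq\mathcal{K}$, and cyclicity of $\xi_0$ gives $\mathcal{K}=H$, i.e. $H=\bigoplus_n H_n$.

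I expect the main obstacle to be exactly the interchange of a $w^{\ast}$-topology integral with the vector $\xi_0$ in part ii) (the identity $P_n^{H}(m\xi_0)=P_n^{M}(m)\xi_0$) and, in the same vein, the passage from $w^{\ast}$-density of $\mathrm{span}\{M_n\}$ in $M$ to norm density in $H$ in part iv); both are resolved by the normality of vector functionals on a von Neumann algebra, with Mazur's theorem used in iv). As an alternative for iv) one can bypass this: $z\mapsto U_z$ is strongly continuous — one checks $\|\alpha_z(m)\xi_0-\alpha_{z_0}(m)\xi_0\|^2=2\varphi(m^{\ast}m)-2\,\mathrm{Re}\,\varphi\!\left(m^{\ast}\alpha_{z_0^{-1}z}(m)\right)\to0$ using $\alpha$-invariance of $\varphi$ and the $w^{\ast}$-continuity of $\alpha$ — so if $\xi\perp H_n$ for all $n$ then the continuous $H$-valued function $z\mapsto U_z\xi$ has all Fourier coefficients $P_n^{H}\xi$ equal to $0$ and so vanishes by Fejér's theorem, forcing $\xi=U_1\xi=0$.
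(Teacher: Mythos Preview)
Your proof is correct and follows essentially the same route as the paper. Part i) is identical; for ii) and iii) the paper writes the key identity $M_n\xi_0=P_n^{H}(M\xi_0)$ as a one-line chain of equalities (implicitly using that $\alpha_z(m)\xi_0=U_z(m\xi_0)$), whereas you justify the interchange of the $w^{\ast}$-integral with evaluation at $\xi_0$ explicitly via normality of vector functionals---more careful, but the same idea. For iv) the paper uses the dual formulation: if $\eta\perp H_n$ for all $n$ then $\eta\perp M_n\xi_0$ by ii), hence $\eta\perp M\xi_0$ by $w^{\ast}$-density of $\mathrm{span}\{M_n\}$ in $M$ (again normality of $\langle\,\cdot\,\xi_0,\eta\rangle$), so $\eta=0$; this is slightly more direct than your Mazur argument but rests on exactly the same ingredient, and your Fej\'er alternative is an independent and valid second proof.
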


\begin{proof}
i) Let $\xi\in H_{n},\eta\in H_{k}.$ Then $U_{z}(\xi)=z^{n}\xi$ and
$U_{z}(\eta)=z^{k}\eta,$ for all $z\in\boldsymbol{T.}$ Since $U_{z}$ are
unitary, we have%
\[
\left\langle \xi,\eta\right\rangle =\left\langle U_{z}\xi,U_{z}\eta
\right\rangle =z^{n-k}\left\langle \xi,\eta\right\rangle ,z\in\boldsymbol{T}%
\]
Since $n\neq k$ it follows that $\left\langle \xi,\eta\right\rangle
=0$\newline ii) Since $\xi_{0}$ is cyclic for $M,$ the subspace $M\xi
_{0}=\left\{  m\xi_{0}|m\in M\right\}  $\ is dense in $H.$\ Then, if
$P_{n}^{H}$ and $P_{n}^{M}$ are the above projections, we have
\[
M_{n}\xi_{0}=P_{n}^{M}(M)\xi_{0}=\left\{
{\displaystyle\int}
\overline{z}^{n}\alpha_{z}(m)(\xi_{0})dz|m\in M\right\}  =
\]%
\[
=\left\{
{\displaystyle\int}
\overline{z}^{n}U_{z}(\xi)dz|\xi=m\xi_{0},m\in M\right\}  =\left\{  P_{n}%
^{H}(\xi)|\xi=m\xi_{0},m\in M\right\}
\]
Since the subspace $M\xi_{0}$ is dense in $H$\ and $P_{n}^{H}$ is an
orthogonal projection, the result stated in ii) follows\newline iii) Since
$\xi_{0}$ is cyclic for $M^{\prime},$ the subspace $M^{\prime}\xi_{0}=\left\{
m^{\prime}\xi_{0}|m^{\prime}\in M^{\prime}\right\}  $\ is dense in $H.$ Then,
if $P_{n}^{H}$ and $P_{n}^{M}$ are the above projections, we have
\[
(M^{\prime})_{n}\xi_{0}=P_{n}^{M^{\prime}}(M^{\prime})\xi_{0}=\left\{
{\displaystyle\int}
\overline{z}^{n}\alpha_{z}^{\prime}(m^{\prime})(\xi_{0})dz|m^{\prime}\in
M^{\prime}\right\}  =
\]%
\[
=\left\{
{\displaystyle\int}
\overline{z}^{n}U_{z}(\xi)dz|\xi=m^{\prime}\xi_{0},m^{\prime}\in M\right\}
=\left\{  P_{n}^{H}(\xi)|\xi=m^{\prime}\xi_{0},m^{\prime}\in M^{\prime
}\right\}
\]
Since $\xi_{0}$ is cyclic for $M^{\prime}$\ , the result follows as in ii).
\newline iv) Let $\eta\in H$ be such that $\eta\perp H_{n}$ for all $n\in%
\mathbb{Z}
.$ Since $M$\ is the $w^{\ast}$-closed linear span of $\left\{  M_{n}|n\in%
\mathbb{Z}
\right\}  ,$\ it follows from ii) that $\eta\perp M\xi_{0},$ so, since
$\xi_{0}$ is cyclic for $M,$ $\eta=0.$
\end{proof}

\section{Invariant subspaces of subalgebras of analytic elements on
generalized Hardy spaces}

Let $(M,\boldsymbol{T},\alpha),M\subset B(H)$ and $\xi_{0}$ be as in the
previous section. If $sp(\alpha)=\left\{  0\right\}  ,$ then $\alpha$ is
trivial in the sense that $\alpha_{z}=id$ for every $z\in\boldsymbol{T},$
where $id$ is the identity automorphism, so $M=M_{0}$\ and $H=H_{0}$. Since
every von Neumann algebra is reflexive, this case has no interest from the
point of view of this paper. Suppose $sp(\alpha)\neq\left\{  0\right\}  $.
Then, by Proposition 1 v), we have $sp(\alpha^{\prime})=sp(\alpha)\neq\left\{
0\right\}  .$ We now define the generalized Hardy space, $H_{+}$ and the
subalgebras of analytic elements of $M,$\ $M_{+}\subset B(H_{+})$ and
($M^{\prime})_{+}\subset B(H_{+})$\ which are generalizations of the algebra
of analytic Toeplitz operators to the framework of periodic $W^{\ast}%
$-dynamical systems:%
\[
H_{+}=%
{\displaystyle\sum\limits_{n\geqslant0}}
H_{n}%
\]

If $p_{+}$ denotes the projection of $H$\ onto $H_{+}$, $\vee_{n\geqslant
0}M_{n}$ the $wo-$closed algebra generated by $\left\{  M_{n}|n\geqslant
0\right\}  $ and $\vee_{n\geqslant0}(M^{\prime})_{n}$\ the $wo-$closed algebra
generated by $\left\{  (M^{\prime})_{n}|n\geqslant0\right\}  $, we define%
\begin{align*}
M_{+}  &  =p_{+}(\vee_{n\geqslant0}M_{n})|_{H_{+}}\\
(M^{\prime})_{+}  &  =p_{+}(\vee_{n\geqslant0}(M^{\prime})_{n})|_{H_{+}}%
\end{align*}
In this section we will find conditions for the reflexivity of $M_{+}$
[Theorems 5, 8 and 13]. We consider first the case when $sp(\alpha)$\ is finite.

\begin{lemma}
If $sp(\alpha)\ $is finite$,$ then $(M^{\prime})_{+})^{\prime}=M_{+}$ \newline
\end{lemma}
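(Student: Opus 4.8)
The plan is to establish the two inclusions $M_+\subseteq((M')_+)'$ and $((M')_+)'\subseteq M_+$ separately; the first is routine and the second carries all the weight. For the easy direction: $\vee_{n\geq 0}M_n\subseteq M$ and $\vee_{n\geq 0}(M')_n\subseteq M'$ commute in $B(H)$, and since $M_nH_k\subseteq H_{n+k}$ one sees that $\vee_{n\geq 0}M_n$ leaves $H_+$ invariant, so $M_+=(\vee_{n\geq 0}M_n)|_{H_+}$; the same holds for $(M')_+$, and the restrictions still commute, giving $M_+\subseteq((M')_+)'$. For the reverse inclusion I would first reduce to ``homogeneous'' operators. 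The unitaries $U_z$ commute with every $P_n^H$, hence with $p_+$, and $U_zm'U_z^*=z^nm'$ for $m'\in(M')_n$, so $V_z:=U_z|_{H_+}$ is a $wo$-continuous unitary representation of $\boldsymbol{T}$ on $H_+$ normalizing $(M')_+$; consequently, for $T\in((M')_+)'$ the operators $T^{(n)}=\int\overline{z}^{\,n}V_zTV_z^*\,dz$ again lie in $((M')_+)'$ (conjugation by $V_z$ preserves this commutant, since $V_z^*SV_z\in(M')_+$ for $S\in(M')_+$), satisfy $V_zT^{(n)}V_z^*=z^nT^{(n)}$ (so $T^{(n)}H_k\subseteq H_{k+n}$), and — because $sp(\alpha)$ is finite — all but finitely many vanish while $T=\sum_nT^{(n)}$. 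Thus it suffices to treat a single homogeneous $T^{(n)}$. If $n<0$ then $\eta:=T^{(n)}\xi_0\in H_n\cap H_+=\{0\}$, whence $T^{(n)}(b'\xi_0)=b'T^{(n)}\xi_0=0$ for all $b'\in(M')_+$; since $\overline{(M')_+\xi_0}\supseteq\sum_{k\geq 0}\overline{(M')_k\xi_0}=H_+$, we get $T^{(n)}=0$.

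Now suppose $n\geq 0$ and set $\eta=T^{(n)}\xi_0\in H_n$. The crucial step — and the one I expect to be the main obstacle — is to show that $\eta$ is left-bounded for the whole of $M'$, i.e. $\|m'\eta\|\leq\|T^{(n)}\|\,\|m'\xi_0\|$ for every $m'\in M'$; boundedness against the analytic part $(M')_+$ is immediate, because $\|m'T^{(n)}\xi_0\|=\|T^{(n)}m'\xi_0\|$, but the negative spectral directions need an extra idea. Writing $m'=\sum_km'_k$ with $m'_k\in(M')_k$ (a finite sum, by finiteness of the spectrum), the vectors $m'_k\xi_0\in H_k$ and $m'_k\eta\in H_{n+k}$ are mutually orthogonal, so it is enough to bound each $\|m'_k\eta\|$ by $\|T^{(n)}\|\,\|m'_k\xi_0\|$. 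Here one uses that $|m'_k|=((m'_k)^*m'_k)^{1/2}$ belongs to $(M')_0\subseteq(M')_+$, because $(m'_k)^*m'_k\in(M')_{-k}(M')_k\subseteq(M')_0$ and $(M')_0$ is a von Neumann algebra; then $\|m'_k\eta\|=\||m'_k|\eta\|=\||m'_k|T^{(n)}\xi_0\|=\|T^{(n)}|m'_k|\xi_0\|\leq\|T^{(n)}\|\,\||m'_k|\xi_0\|=\|T^{(n)}\|\,\|m'_k\xi_0\|$. Squaring and summing over $k$ gives the claim. Once $\eta$ is left-bounded, the standard argument using that $\xi_0$ is cyclic for $M'$ produces $m\in M$ with $m\xi_0=\eta$; and $m\in M_n$, since $\alpha_z(m)\xi_0=U_zm\xi_0=U_z\eta=z^n\eta=z^nm\xi_0$ and $\xi_0$ is separating for $M$.

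To conclude, for $b'\in(M')_+$ we have $T^{(n)}(b'\xi_0)=b'\eta=b'm\xi_0=mb'\xi_0$, and since such vectors $b'\xi_0$ are dense in $H_+$ while $m\in M_n$ (with $n\geq 0$) leaves $H_+$ invariant, $T^{(n)}=m|_{H_+}\in M_n|_{H_+}$. Summing over $n$, $T=\sum_{n\geq 0}m_n|_{H_+}\in\big(\bigoplus_{n\geq 0}M_n\big)|_{H_+}=(\vee_{n\geq 0}M_n)|_{H_+}=M_+$, so $((M')_+)'\subseteq M_+$ and equality follows. Finiteness of $sp(\alpha)$ enters only to make $T=\sum_nT^{(n)}$ a finite sum and $\bigoplus_{n\geq 0}M_n$ automatically $wo$-closed; with Fej\'er means replacing the finite sum the same scheme should adapt, but the finite case is all that is needed here.
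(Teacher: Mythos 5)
Your argument is correct, and it takes a genuinely different route from the paper's. The paper proves the hard inclusion by a single global extension: given $x\in((M')_+)'$ it defines $\widehat{x}$ on the dense subspace of $H$ spanned by the $(M')_k\xi_0$, $k\in\mathbb{Z}$, via $\widehat{x}(m_k'\xi_0)=m_k'x\xi_0$, checks boundedness on each $H_k$ (using the same key trick you use, namely $\|m_k'\zeta\|=\||m_k'|\zeta\|$ with $|m_k'|\in(M')_0\subseteq(M')_+$), invokes finiteness of $sp(\alpha)$ to get boundedness on all of $H$, verifies that $\widehat{x}$ commutes with every $(M')_n$, so $\widehat{x}\in(M')'=M$, and then compresses: $p_+\widehat{x}p_+=x\in M_+$. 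You instead first average $T$ against the restricted unitaries $V_z=U_z|_{H_+}$ to split it into finitely many homogeneous components $T^{(n)}$, kill the components with $n<0$ using $H_n\cap H_+=\{0\}$, and realize each remaining component as $m|_{H_+}$ with $m\in M_n$ via the left-bounded-vector argument applied to $\eta=T^{(n)}\xi_0$ (your $m(m'\xi_0)=m'\eta$ is the same kind of right-$M'$-module extension as the paper's $\widehat{x}$, but applied one Fourier coefficient at a time). What your organization buys: the homogeneity of $\eta$ makes the vectors $m_k'\eta$ mutually orthogonal across $k$, so the norm estimate for a general $m'=\sum_k m_k'$ is a clean orthogonal sum with constant exactly $\|T^{(n)}\|$ (the paper's global $\widehat{x}$ has non-orthogonal images of the $H_k$, so its bound really needs the finite-spectrum cardinality), and you obtain the sharper structural fact that each Fourier component of an element of $((M')_+)'$ is the compression of an element of the corresponding $M_n$, which is the natural starting point for relaxing the finiteness hypothesis. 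What the paper's route buys is brevity: no spectral decomposition of the operator is needed, finiteness being used only once, to pass from boundedness on each $H_k$ to boundedness on $H$. Both arguments ultimately rest on the commutation theorem $(M')'=M$ and on the observation that $|m_k'|\in(M')_0$ lies in $(M')_+$, so the two proofs are cousins, but yours is a genuinely different decomposition of the work, and it is complete as written for the finite-spectrum case.
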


\begin{proof}
Clearly, $M_{+}\subset((M^{\prime})_{+})^{\prime}.$ Let now $x\in(M^{\prime
})_{+}^{\prime}\subset B(H_{+}).$ Then $x$\ commutes, in particular, with
($M^{\prime})_{k},k\geqslant0.$ Consider the following dense linear
subspace\ of $H$
\[
H^{\prime}=\left\{  \xi\in H|P_{k}^{H}(\xi)\in(M^{\prime})_{k}\xi_{0}\text{
and }P_{k}^{H}(\xi)=0\text{ for all but finitely many }k\in%
\mathbb{Z}
\right\}
\]
Define the operator $\widehat{x}$ on $H^{\prime}$\ as follows
\[
\widehat{x}(m_{k}^{\prime}\xi_{0})=m_{k}^{\prime}x\xi_{0},k\in%
\mathbb{Z}
\]
Since $\xi_{0}$ is a separating vector for $M^{\prime}$\ and $x$\ commutes
with $(M^{\prime})_{+}$, $\widehat{x}$\ is well defined. Due to the obvious
facts that, for every operator $a\in B(H)$ and $\eta\in H,$\ $\left\Vert
a\eta\right\Vert =\left\Vert \left\vert a\right\vert \eta\right\Vert $ where
$\left\vert a\right\vert $\ is the absolute value of $a$, and $\left\vert
m_{k}^{\prime}\right\vert \in(M^{\prime})_{0}\subset(M^{\prime})_{+},$ it
follows that, if $m_{k}^{\prime}\in(M^{\prime})_{k},k\in%
\mathbb{Z}
,$ then
\begin{align*}
\left\Vert \widehat{x}m_{k}^{\prime}\xi_{0})\right\Vert  &  =\left\Vert
m_{k}^{\prime}x\xi_{0}\right\Vert =\left\Vert \left\vert m_{k}^{\prime
}\right\vert x\xi_{0}\right\Vert =\\
\left\Vert x(\left\vert m_{k}^{\prime}\right\vert \xi_{0})\right\Vert  &
\leq\left\Vert x\right\Vert \left\Vert \left\vert m_{k}^{\prime}\right\vert
\xi_{0}\right\Vert =\left\Vert x\right\Vert \left\Vert m_{k}^{\prime}\xi
_{0}\right\Vert
\end{align*}
Therefore $\widehat{x}$ is bounded on $H_{k},k\in sp(\alpha)$\ and $\left\Vert
\widehat{x}\right\Vert \leq\left\Vert x\right\Vert ,$\ so, since $sp(\alpha)$
is finite$,$ $\widehat{x}$\ is bounded on $H$. On the other hand, if $n,k\in%
\mathbb{Z}
,m_{n}^{\prime}\in(M^{\prime})_{n},m_{k}^{\prime}\in(M^{\prime})_{k}$ we have
$m_{n}^{\prime}m_{k}^{\prime}=m_{n+k}^{\prime}\in(M^{\prime})_{n+k}$, so%
\[
m_{n}^{\prime}\widehat{x}(m_{k}^{\prime}\xi_{0})=m_{n}^{\prime}m_{k}^{\prime
}x\xi_{0}=m_{n+k}^{\prime}x\xi_{0}=\widehat{x}(m_{n+k}^{\prime}\xi
_{0})=\widehat{x}(m_{n}^{\prime}m_{k}^{\prime}\xi_{0})
\]
Therefore, $\widehat{x}\in(M^{\prime})^{\prime}=M.$ Thus, $p_{+}\widehat
{x}p_{+}=x\in M_{+}$ and we are done.
\end{proof}

Suppose that $sp(\alpha)$\ is finite. In this case, since, as noticed above,
for every $n,k\in sp(\alpha),m_{n}\in M_{n},m_{k}\in M_{k}$ we have
$m_{n}m_{k}\in M_{n+k},$ it follows that every element of $M_{n},n\in
sp(\alpha)\backslash\left\{  0\right\}  $ is nilpotent. Notice also that, by
Proposition 1 v), we have $sp(\alpha)=sp(\alpha^{\prime}).$

\begin{lemma}
Let $n\in sp(\alpha)\backslash\left\{  0\right\}  ,n>0.$ and $w\in(M^{\prime
})_{n}$ a partial isometry. If $w$\ is nilpotent, then $w\in(algLat(M_{+}%
))^{\prime}$
\end{lemma}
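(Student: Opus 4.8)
I want to show that $bw=wb$ as operators on $H_{+}$ for every $b\in algLat(M_{+})$. First the elementary geometry: since $n>0$, for $\xi\in H_{k}$ with $k\ge 0$ one has $U_{z}(w\xi)=z^{\,n+k}w\xi$, so $w(H_{+})\subseteq H_{+}$; from now on $w$, its powers $w^{k}$ and $b$ are regarded as operators on $H_{+}$, and $w^{N}=0$ for some $N\ge 1$ by nilpotency. By the compression argument used in the proof of Lemma 3 (applied to $w\in M^{\prime}$ and to the lower triangular elements $a\in\vee_{m\ge 0}M_{m}$, for which $H_{+}$ is invariant) every $\widetilde{a}\in M_{+}$ commutes with $w|_{H_{+}}$; the same argument yields $(M^{\prime})_{0}|_{H_{+}}\subseteq(M_{+})^{\prime}$, so for each projection $p\in(M^{\prime})_{0}$ the subspace $pH_{+}$ lies in $Lat(M_{+})$. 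Hence $b$ leaves both $pH_{+}$ and $(1-p)H_{+}$ invariant, so it commutes with $p$, and therefore with all of $(M^{\prime})_{0}$; in particular with $e:=w^{\ast}w$ and $f:=ww^{\ast}$, and $eH_{+},fH_{+}\in Lat(M_{+})$.

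The main device is a $1-tw$ substitution. Fix $\xi\in H_{+}$ and a scalar $t$, and put $\eta_{t}:=(1-tw)^{-1}\xi=\sum_{k=0}^{N-1}t^{k}w^{k}\xi$ (a finite sum, and $1-tw$ is invertible on $H_{+}$). Because each $\widetilde{a}\in M_{+}$ commutes with $w$, $\widetilde{a}\,\eta_{t}=(1-tw)^{-1}\widetilde{a}\,\xi$, so $\overline{M_{+}\eta_{t}}=(1-tw)^{-1}\overline{M_{+}\xi}$. Since $b$ leaves this cyclic subspace invariant, $(1-tw)\,b\,(1-tw)^{-1}\xi\in\overline{M_{+}\xi}$ for every $t$. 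Using $(1-tw)b(1-tw)^{-1}=b+t[b,w](1-tw)^{-1}=b+\sum_{j=0}^{N-1}t^{\,j+1}[b,w]w^{j}$ together with $b\xi\in\overline{M_{+}\xi}$, and comparing coefficients of this polynomial in $t$ (all lying in the closed subspace $\overline{M_{+}\xi}$), we obtain in particular $[b,w]\xi\in\overline{M_{+}\xi}$. Replacing $b$ by $b\widetilde{a}\in algLat(M_{+})$ (recall $algLat(M_{+})$ is a unital algebra containing $M_{+}$) and using $[b\widetilde{a},w]=[b,w]\widetilde{a}$, we get $[b,w]\widetilde{a}\,\xi\in\overline{M_{+}\xi}$ for all $\widetilde{a}\in M_{+}$; as $M_{+}\xi$ is dense in $\overline{M_{+}\xi}$ and $[b,w]$ is bounded, $[b,w]$ leaves $\overline{M_{+}\xi}$ invariant for every $\xi$, whence $c:=[b,w]\in algLat(M_{+})$. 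By the reasoning of the previous paragraph, $c$ too commutes with $(M^{\prime})_{0}$.

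It remains to upgrade $c\in algLat(M_{+})$ to $c=0$, and I would argue by induction on the nilpotency index $N$ of $w$. For $N=1$, $w=0$. For $N=2$, $w^{2}=0$ gives $ef=w^{\ast}w^{2}w^{\ast}=0$; since $b$ commutes with $e,f$ and $w=fwe$, the operator $c$ maps $eH_{+}$ into $fH_{+}$ and annihilates $(1-e)H_{+}$, while $\overline{M_{+}\xi}\subseteq eH_{+}$ whenever $\xi\in eH_{+}$ (as $e\in(M_{+})^{\prime}$); hence for such $\xi$, $c\xi\in\overline{M_{+}\xi}\cap fH_{+}\subseteq eH_{+}\cap fH_{+}=\{0\}$, so $c=0$. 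For $N>2$ I would peel the nilpotency chain using the decreasing family of $M_{+}$-invariant subspaces $H_{+}\supseteq\overline{wH_{+}}\supseteq\overline{w^{2}H_{+}}\supseteq\cdots\supseteq\overline{w^{N}H_{+}}=\{0\}$, whose successive projections are the range projections $f_{k}$ of $w^{k}$ and lie in $(M^{\prime})_{0}$ (hence commute with $b$ and $c$): $w$ maps $\overline{wH_{+}}$ into $\overline{w^{2}H_{+}}$, so the picture restricted to $\overline{wH_{+}}$ is again of the lemma's type with a nilpotent operator of strictly smaller index, and the inductive hypothesis, combined with the commutation of $c$ with all the source and range projections $e_{k},f_{k}\in(M^{\prime})_{0}$ of the iterates $w^{k}$ and with the $N=2$ analysis of the "bottom" piece, should give $c=0$, and thus $w\in(algLat(M_{+}))^{\prime}$.

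The step I expect to be the main obstacle is exactly this last reduction: making the passage to the restricted situation rigorous — verifying it again falls under the hypotheses of the lemma, and controlling the overlap $eH_{+}\cap fH_{+}$ once $e$ and $f$ need no longer commute — so that the induction actually closes. The $N=2$ case isolates the mechanism (nilpotency forces an orthogonality of the source and range projections, which confines $c\xi$ to a subspace that $c\xi\in\overline{M_{+}\xi}$ forces it to be orthogonal to), and I expect the general argument to be essentially a chain-by-chain version of this.
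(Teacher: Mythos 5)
Your preparatory work is correct: the compression argument giving $w(H_{+})\subseteq H_{+}$ and $[M_{+},w]=0$, the observation that any $b\in algLat(M_{+})$ commutes with every projection of $(M^{\prime})_{0}$ compressed to $H_{+}$, the $(1-tw)$-conjugation argument showing $c:=[b,w]\in algLat(M_{+})$, and the $N=2$ case (where $ef=0$ together with $c\xi\in\overline{M_{+}\xi}\subseteq eH_{+}$ and $c(eH_{+})\subseteq fH_{+}$ forces $c=0$) all hold up. Incidentally, your base-case mechanism is genuinely different from the paper's: for $k=2$ the paper block-decomposes $H_{+}$ along $p=1-w^{\ast}w\in(M^{\prime})_{0}$ and uses the graph subspace $K=\{c\xi\oplus\xi\}$, which lies in $Lat(M_{+})$ because $m_{1}c=cm_{2}$, to force $x_{1}c=cx_{2}$; your route through $c\in algLat(M_{+})$ and the orthogonality of the initial and final projections is a valid alternative.

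The genuine gap is exactly where you flag it: the inductive step for nilpotency index $N>2$ is not carried out, and the route you sketch does not obviously work. The restriction of $w$ to $\overline{wH_{+}}$ need not be a partial isometry, so the restricted picture does not fall under the lemma's hypotheses; the orthogonal projection onto $\overline{w^{k}H_{+}}$ is not the compression of the range projection of $w^{k}\in M^{\prime}$ (that compression projects onto $\overline{w^{k}H}\cap H_{+}$, which can be strictly larger), and, being merely an element of $Lat(M_{+})$ rather than of $(M^{\prime})_{0}$, it need not commute with $b$ or $c$. The paper closes the induction by a different and simpler decomposition, splitting at the kernel rather than at the ranges: with $p=1-w^{\ast}w\in(M^{\prime})_{0}$ the projection onto $\ker w$, write $w=pw+(1-p)w$. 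Then $(pw)^{2}=p(wp)w=0$, so $pw$ is handled by the square-zero case, while $(1-p)w=(1-p)w(1-p)$ satisfies $((1-p)w)^{k}=(1-p)w^{k}=0$ whenever $w^{k+1}=0$, because $w^{k}H_{+}\subseteq\ker w$; thus $(1-p)w$ is covered by the inductive hypothesis for index $k$, and additivity gives $w=pw+(1-p)w\in(algLat(M_{+}))^{\prime}$. This splitting at the kernel projection, which stays inside $(M^{\prime})_{n}$ and inside the commutant of $M_{+}$ at every step, is the missing idea; with it (and noting that your $(1-tw)$ machinery does not even require $w$ to be a partial isometry, only nilpotent and commuting with $M_{+}$, which helps in applying the inductive hypothesis to $(1-p)w$), your argument closes.
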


\begin{proof}
We will use a method similar with the one used in the proof of [4, Proposition
19 i)]. Suppose $w^{k}=0$ for some $k\in%
\mathbb{N}
,k\geqslant2.$\ We will prove the Lemma by induction. Let first $k=2,$ so
$w^{2}=0.$ Let $p=1-w^{\ast}w\in(M^{\prime})_{0},$ be the orthogonal
projection of $H_{+}$ onto the kernel of $w.$ Then, $p,1-p\in Lat(M_{+})$ and
therefore, with respect to the decomposition $I_{H_{+}}=p+(1-p),$ we have
\begin{align*}
w  &  =\left(
\begin{array}
[c]{cc}%
0 & c\\
0 & 0
\end{array}
\right) \\
m  &  =\left(
\begin{array}
[c]{cc}%
m_{1} & 0\\
0 & m_{2}%
\end{array}
\right)  ,\forall m\in M_{+}\\
x  &  =\left(
\begin{array}
[c]{cc}%
x_{1} & 0\\
0 & x_{2}%
\end{array}
\right)  ,\forall x\in algLat(M_{+})
\end{align*}
Since $mw=wm,$ we have $m_{1}c=cm_{2}.$ Therefore the subspace $K=\left\{
c\xi\oplus\xi|\xi\in H_{+}\right\}  $ is invariant for $M_{+}$. Since $x\in
algLat(M_{+})$, it follows that $K\ $\ is invariant for $x$. Thus
$x_{1}c=cx_{2}$, so $wx=xw$ and we are done with the case $k=2.$ Suppose next
that for every partial isometry $w\in(M^{\prime})_{n}$ with $w^{k}=0$ it
follows that $w\in(algLat(M_{+}))^{\prime}$ and let $w\in(M^{\prime})_{n}$
with $w^{k+1}=0$. Let $p$ denotes the orthogonal projection on $\ker
w,p=1-w^{\ast}w,$ so that $p\in(M^{\prime})_{0}$. Since $wp=0$ it follows that
\newline%
\[
(1-p)w=(1-p)w(1-p)
\]
\newline and therefore\newline%
\[
(1-p)w^{k}=((1-p)w(1-p))^{k},k\in%
\mathbb{N}
.
\]
\newline Since $w^{k+1}=0$ we have $w^{k}(H_{+})\subset\ker w$ and
therefore\newline%
\[
0=(1-p)w^{k}=((1-p)w(1-p))^{k}.
\]
\newline By hypothesis, $(1-p)w=(1-p)w(1-p)\in(algLat$\textit{(}%
$M_{+}))^{\prime}.$ On the other hand, since $w\in(M^{\prime})_{1}$ and
$p_{0}\in(M^{\prime})_{0}$ we have $pw\in(M^{\prime})_{1}.$ Since obviously
$(pw)^{2}=0$, by the previous arguments for $k=2$, it follows that
$pw\in(algLat$\textit{(}$M_{+}))^{\prime}$. Therefore%
\[
w=pw+(1-p)w\in(algLat\text{\textit{(}}M_{+}))^{\prime}%
\]
and the proof is completed
\end{proof}

We can now state and prove the following

\begin{theorem}
Let $(M,\boldsymbol{T},\alpha)$ be as above. If $sp(\alpha)$ is finite, then
$M_{+}$ is reflexive.
\end{theorem}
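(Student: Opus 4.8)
The plan is to show $M_{+} = algLat(M_{+})$ by combining the two lemmas already established. Since $M_{+} \subset algLat(M_{+})$ trivially, the work is the reverse inclusion. By Lemma 3, $M_{+} = ((M^{\prime})_{+})^{\prime}$, so it suffices to prove that $algLat(M_{+}) \subset ((M^{\prime})_{+})^{\prime}$, i.e. that every $x \in algLat(M_{+})$ commutes with every element of $(M^{\prime})_{+}$. Because $(M^{\prime})_{+}$ is the $wo$-closure of the algebra generated by the spaces $(M^{\prime})_{n}$, $n \geq 0$, and $x$ is a fixed bounded operator, it is enough to show $x$ commutes with each $(M^{\prime})_{n}$ for $n \geq 0$.

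First I would dispose of $n = 0$: since $(M^{\prime})_{0} \subset (M^{\prime})_{+}$ and every operator in $algLat(M_{+})$ leaves invariant all subspaces in $Lat(M_{+})$, and $(M^{\prime})_{0}$ consists of operators commuting with $M$ (hence with $M_{+}$), the projections in $(M^{\prime})_{0}$ lie in $Lat(M_{+})$; a standard argument (the commutant of a von Neumann algebra is generated by its projections, and invariance under a projection $e$ together with its complement $1-e$ forces commutation with $e$) gives that $x$ commutes with $(M^{\prime})_{0}$. For $n > 0$ in $sp(\alpha)$, the key observation is that since $sp(\alpha)$ is finite, every element of $(M^{\prime})_{n}$ is nilpotent (as noted in the text: $m_n' m_k' \in (M')_{n+k}$, and for $n > 0$ the exponents eventually leave the finite spectrum). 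Take the polar decomposition $m_{n}^{\prime} = w|m_{n}^{\prime}|$ of an arbitrary element; one checks $w \in (M^{\prime})_{n}$ (the action $\alpha^{\prime}$ is implemented by $U_z$, which commutes with the modular data, so the polar parts of a spectral element stay in the same spectral subspace, using uniqueness of polar decomposition) and $|m_{n}^{\prime}| \in (M^{\prime})_{0}$, and $w$ is nilpotent because $m_{n}^{\prime}$ is. Then Lemma 4 gives $w \in (algLat(M_{+}))^{\prime}$, and the already-established commutation with $(M^{\prime})_{0}$ gives $|m_{n}^{\prime}| \in (algLat(M_{+}))^{\prime}$, so $m_{n}^{\prime} = w|m_{n}^{\prime}| \in (algLat(M_{+}))^{\prime}$.

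Putting this together: $x$ commutes with every $(M^{\prime})_{n}$, $n \geq 0$, hence with the algebra they generate, hence (since $x$ is bounded and multiplication is separately $wo$-continuous on bounded sets) with its $wo$-closure $(M^{\prime})_{+}$. Thus $x \in ((M^{\prime})_{+})^{\prime} = M_{+}$ by Lemma 3, completing the proof that $algLat(M_{+}) = M_{+}$.

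The main obstacle I anticipate is the claim that the polar decomposition of an element of $(M^{\prime})_{n}$ has both factors living in the expected spectral subspaces, in particular $w \in (M^{\prime})_{n}$. This requires care: one argues that $\alpha_{z}^{\prime}(m_{n}^{\prime}) = z^{n} m_{n}^{\prime}$ implies $\alpha_{z}^{\prime}(w)\, \alpha_{z}^{\prime}(|m_{n}^{\prime}|) = z^{n} w |m_{n}^{\prime}|$, and since $\alpha_{z}^{\prime}$ is a $*$-automorphism it sends polar decompositions to polar decompositions, so by uniqueness $\alpha_{z}^{\prime}(w) = z^{n} w$ (on the closure of the range of $|m_n'|$, hence everywhere since $w$ vanishes on its kernel) and $\alpha_{z}^{\prime}(|m_{n}^{\prime}|) = |m_{n}^{\prime}|$. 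A secondary subtlety is making sure the nilpotency of the partial isometry $w$ genuinely follows from that of $m_{n}^{\prime}$; this is immediate once $|m_n'| \in (M')_0$ commutes with appropriate pieces, or more directly from the fact that $w$ and $m_n'$ generate the same principal ideals. Everything else is bookkeeping with the structure already set up in Lemmas 3 and 4.
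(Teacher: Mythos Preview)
Your argument is correct and follows essentially the same route as the paper: show $x\in algLat(M_{+})$ commutes with $(M')_{0}$ via the invariant subspaces $eH_{+}$, then use polar decomposition together with Lemma~4 to get commutation with each $(M')_{n}$ for $n>0$, and conclude via Lemma~3. One minor simplification: your ``secondary subtlety'' about deducing nilpotency of $w$ from that of $m_{n}'$ is unnecessary, since once you know $w\in (M')_{n}$ with $n>0$ and $sp(\alpha')=sp(\alpha)$ finite, nilpotency of $w$ is automatic from $w^{k}\in (M')_{kn}=\{0\}$ for $k$ large.
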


\begin{proof}
Let $x\in algLat$\textit{(}$M_{+}).$ Since $pH_{+}\in Lat(M_{+})$\ for every
projection $p\in(M^{\prime})_{0}$, it follows that $x\in(M^{\prime}%
)_{0}^{\prime}.$ If $a\in(M^{\prime})_{n}$ for some $n>0,$ then $a$ has a
polar decomposition $a=w\left\vert a\right\vert $, where obviously
$w\in(M^{\prime})_{n}$ and $\left\vert a\right\vert \in(M^{\prime})_{0}.$
Since $sp(\alpha)$ is finite, $w$\ is nilpotent and terefore by the previous
lemma $xw=wx,$ so $x\in(M^{\prime})_{+}^{\prime}.$Applying Lemma 3 it follows
that $x\in M_{+}$ and we are done.
\end{proof}

\begin{example}
Let $M$\ be a von Neumann algebra and $\left\{  p_{1},p_{2},...,p_{k}\right\}
$ a family of mutually orthogonal projections of $M.$ Let $U_{z}=%
{\displaystyle\sum}
z^{n_{i}}p_{i},z\in\boldsymbol{T},$ where $\left\{  n_{1},n_{2},...,n_{k}%
\right\}  \subset%
\mathbb{Z}
$ is a decreasing finite set. Consider the periodic flow $\alpha_{z}%
(m)=U_{z}mU_{z}^{\ast}$,$z\in\boldsymbol{T}$ on $M.$ It is easy to show that
\[
M_{+}=\left\{  m\in M|m(%
{\displaystyle\sum\nolimits_{j=1}^{i}}
p_{j}H)\subset%
{\displaystyle\sum\nolimits_{j=1}^{i}}
p_{j}H,i=1,...,k\right\}
\]
is the reflexive nest subalgebra of $M$\ corresponding to the nest $p_{1}\leq
p_{1}+p_{2}\leq...\leq%
{\displaystyle\sum\nolimits_{j=1}^{k}}
p_{j}$ and $sp(\alpha)$ is finite.
\end{example}

Let $n_{0}=\min\left\{  n\in sp(\alpha)|n>0\right\}  .$ Suppose that
$M_{n_{0}}$ contains a unitary operator, $v_{0}.$

\begin{proposition}
In the above conditions, we have\newline i) $M_{kn_{0}}=\left\{  v_{0}%
^{k}a|a\in M_{0}\right\}  ,k\in%
\mathbb{Z}
$\newline ii) $H_{kn_{0}}=\overline{\left\{  v_{0}^{k}a\xi_{0}|a\in
M_{0}\right\}  },k\in%
\mathbb{Z}
$\newline iii) ($M^{\prime})_{n_{0}}$ contains an unitary operator, $w_{0}%
$\newline iv) ($M^{\prime})_{kn_{0}}=\left\{  w_{0}^{k}b|b\in(M^{\prime}%
)_{0}\right\}  ,k\in%
\mathbb{Z}
$\newline v) $sp(\alpha)$ is a subgroup of $%
\mathbb{Z}
$
\end{proposition}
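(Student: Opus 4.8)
The plan is to establish i) first, since ii)--v) all follow from it with modest extra work. For i), the key observation is that left multiplication by $v_0$ maps $M_0$ into $M_{n_0}$ (because $\alpha_z(v_0 a)=\alpha_z(v_0)\alpha_z(a)=z^{n_0}v_0 a$ for $a\in M_0$), and this map is injective with a left inverse given by left multiplication by $v_0^*=v_0^{-1}\in M_{-n_0}$. For surjectivity onto $M_{n_0}$: if $m\in M_{n_0}$, then $v_0^* m\in M_{-n_0}M_{n_0}\subseteq M_0$, and $v_0(v_0^* m)=m$ since $v_0$ is unitary, so $m=v_0 a$ with $a=v_0^* m\in M_0$. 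The same argument iterates to give $M_{kn_0}=\{v_0^k a\mid a\in M_0\}$ for $k\geq 0$, and taking adjoints (using $M_{-kn_0}=(M_{kn_0})^*$ and that $(v_0^k a)^*=a^* v_0^{-k}=v_0^{-k}\alpha$ for suitable $\alpha\in M_0$, i.e. $v_0^{-k}(v_0^k a^* v_0^{-k})$) handles negative $k$; alternatively just note $v_0^{-1}=v_0^*\in M_{-n_0}$ is itself a unitary in the spectral subspace for $-n_0$ and run the positive case with $v_0$ replaced by $v_0^{-1}$.

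For ii), combine i) with Proposition 2 ii): $H_{kn_0}=\overline{M_{kn_0}\xi_0}=\overline{\{v_0^k a\xi_0\mid a\in M_0\}}$. For iii), apply Proposition 1 iii): $(M^{\prime})_{n_0}=JM_{-n_0}J$, and $Jv_0^* J$ is a unitary (since $J$ is a conjugate-linear isometry and $v_0^*$ is unitary) lying in $JM_{-n_0}J=(M^{\prime})_{n_0}$; set $w_0=Jv_0^*J$. Then iv) is just i) applied to the system $(M^{\prime},\boldsymbol{T},\alpha^{\prime})$, which has the same spectrum by Proposition 1 v), so $n_0$ is also its least positive spectral element and $w_0\in(M^{\prime})_{n_0}$ is the required unitary.

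For v), I would argue that $sp(\alpha)\subseteq n_0\mathbb{Z}$: if $m\in sp(\alpha)$ with $m>0$, write $m=qn_0+r$ with $0\leq r<n_0$ by division; if $M_m\neq\{0\}$, pick $0\neq a\in M_m$, then $v_0^{-q}a\in M_{-qn_0}M_m\subseteq M_r$ and it is nonzero (apply $v_0^q$ to recover $a$), forcing $r\in sp(\alpha)$, hence $r=0$ by minimality of $n_0$ among positive spectral values. Negative spectral values are handled by $M_{-m}=(M_m)^*$. Conversely every multiple $kn_0$ is in the spectrum because $v_0^k\neq 0$ lies in $M_{kn_0}$. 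Thus $sp(\alpha)=n_0\mathbb{Z}$, a subgroup of $\mathbb{Z}$. The only mildly delicate point is keeping track of conjugate-linearity of $J$ in iii) when checking $w_0$ is unitary — but this is routine since $J$ being an isometric involution ($J^2=I$, $\langle J\eta,J\zeta\rangle=\langle\zeta,\eta\rangle$) makes $x\mapsto JxJ$ a $*$-anti-isomorphism, which still sends unitaries to unitaries. No step presents a genuine obstacle.
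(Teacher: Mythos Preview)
Your proposal is correct and follows essentially the same route as the paper: for i) you use that $v_0^{-k}m\in M_0$ whenever $m\in M_{kn_0}$ (the paper writes this in one line as $(v_0^\ast)^k m=a\in M_0$); ii)--iv) are derived from i) via Proposition~2~ii), Proposition~1~iii) with $w_0=Jv_0^\ast J$, and symmetry with $M'$, exactly as in the paper; and for v) your division-with-remainder argument is the same as the paper's ``choose $k_0$ with $k_0 n_0\le l\le (k_0+1)n_0$'' step. The only difference is cosmetic: you spell out the $k=1$ case before iterating and add a remark on why $J(\cdot)J$ preserves unitaries, whereas the paper is terser.
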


\begin{proof}
Let $m\in M_{kn_{0}}.$ Then, ($v_{0}^{\ast})^{k}m=a\in M_{0}$\newline ii)
follows from i) and Proposition 2, ii) \newline iii) By Proposition 1, iii)
the unitary operator $w_{0}=Jv_{0}^{\ast}J$ is in ($M^{\prime})_{n_{0}}%
$\newline iv) Similar with i)\newline v) From i) it follows that $\left\{
kn_{0}|k\in%
\mathbb{Z}
\right\}  \subseteq sp(\alpha)$. On the other hand, if $l\in%
\mathbb{Z}
,l\in sp(\alpha)$, then there exists $k_{0}\in%
\mathbb{Z}
$ such that $k_{0}n_{0}\leq l\leq(k_{0}+1)n_{0}.$ It can be easily verified
that, ($v_{0}^{\ast})^{k}\in M_{kn_{0}}^{\ast}=M_{-kn_{0}}$ and, since $v_{0}$
is unitary, we have ($v_{0}^{\ast})^{k_{0}}M_{l}\neq\left\{  0\right\}  .$ But
($v_{0}^{\ast})^{k_{0}}M_{l}\subseteq M_{l-k_{0}n_{0}},$ so $l-k_{0}n_{0}\in
sp(\alpha).$ Since $l-k_{0}n_{0}\leq n_{0}$ and $n_{0}=\min\left\{  n|n\in
sp(\alpha),n>0\right\}  $, it follows that either $l=k_{0}n_{0}$ or
$l=(k_{0}+1)n_{0}.$ Therefore, $sp(\alpha)=\left\{  kn_{0}|k\in%
\mathbb{Z}
\right\}  $ and we are done
\end{proof}

\begin{theorem}
Let $(M,\boldsymbol{T},\alpha),H_{+,}M_{+},v_{0}$ be as above. Then, the
algebra $M_{+}\subset B(H_{+})$\ is reflexive.
\end{theorem}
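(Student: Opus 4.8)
The plan is to prove reflexivity the way Theorem~5 was proved: establish the two inclusions $algLat(M_{+})\subseteq((M^{\prime})_{+})^{\prime}$ and $((M^{\prime})_{+})^{\prime}\subseteq M_{+}$, which together with the trivial $M_{+}\subseteq algLat(M_{+})$ give $M_{+}=algLat(M_{+})$. By Proposition~7 v) we have $sp(\alpha)=n_{0}\mathbb{Z}$; since then $\alpha_{z}$ depends only on $z^{n_{0}}$ and the spaces $H_{n}$, $H_{+}$, the algebras $M_{+}$, $(M^{\prime})_{+}$ and the unitaries $v_{0}$, $w_{0}=Jv_{0}^{\ast}J\in(M^{\prime})_{n_{0}}$ all involve only the sub-grading indexed by $n_{0}\mathbb{Z}$, I may and will assume $n_{0}=1$, i.e. $sp(\alpha)=\mathbb{Z}$, $M_{n}=v_{0}^{n}M_{0}$, $(M^{\prime})_{n}=w_{0}^{n}(M^{\prime})_{0}$. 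Note that, since $v_{0}H_{n}=H_{n+1}=w_{0}H_{n}$, the operators $V_{0}=v_{0}|_{H_{+}}\in M_{+}$ and $W_{0}=w_{0}|_{H_{+}}\in(M^{\prime})_{+}$ are isometries of $H_{+}$ with common range $\bigoplus_{n\geqslant1}H_{n}$, that $(M^{\prime})_{+}$ is the $wo$-closed algebra generated by $(M^{\prime})_{0}|_{H_{+}}$ and $W_{0}$, and that $W_{0}$ commutes with every element of $M_{+}$.

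For $((M^{\prime})_{+})^{\prime}\subseteq M_{+}$ I would rerun the proof of Lemma~3; the only place that used finiteness of $sp(\alpha)$ was the boundedness of the operator $\widehat{x}$, and this now follows from the unitarity of $w_{0}$. Given $x\in((M^{\prime})_{+})^{\prime}$, the operator $\widehat{x}$ on $H^{\prime}$ given by $\widehat{x}(m^{\prime}\xi_{0})=m^{\prime}x\xi_{0}$ (for $m^{\prime}\in(M^{\prime})_{k}$) is well defined as before, and since $x$ commutes with $(M^{\prime})_{k}|_{H_{+}}$ for $k\geqslant0$ one checks $\widehat{x}\xi=x\xi$ for $\xi\in H^{\prime}\cap H_{+}$; for general $\xi\in H^{\prime}$, choosing $N$ with $w_{0}^{N}\xi\in H^{\prime}\cap H_{+}$ and using $\widehat{x}(w_{0}^{N}\xi)=w_{0}^{N}\widehat{x}\xi$ yields $\Vert\widehat{x}\xi\Vert=\Vert x(w_{0}^{N}\xi)\Vert\leqslant\Vert x\Vert\,\Vert\xi\Vert$. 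The remainder of the proof of Lemma~3 is then unchanged: $\widehat{x}\in(M^{\prime})^{\prime}=M$, and $\widehat{x}\xi_{0}=x\xi_{0}\in H_{+}$ forces $P_{n}^{M}(\widehat{x})=0$ for $n<0$, so $\widehat{x}\in\vee_{n\geqslant0}M_{n}$ and $x=p_{+}\widehat{x}p_{+}|_{H_{+}}\in M_{+}$.

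For $algLat(M_{+})\subseteq((M^{\prime})_{+})^{\prime}$, fix $x\in algLat(M_{+})$. Exactly as in Theorem~5, the subspaces $qH_{+}$ with $q$ a projection of $(M^{\prime})_{0}$ lie in $Lat(M_{+})$, so $x$ commutes with $(M^{\prime})_{0}|_{H_{+}}$, and it remains to show $xW_{0}=W_{0}x$. Lemma~4 is unavailable, since $(M^{\prime})_{1}$ need not contain nilpotent partial isometries, so I would instead use: (a) $\overline{yH_{+}}\in Lat(M_{+})$ for every $y$ commuting with $M_{+}$ — because $M_{+}\,\overline{yH_{+}}=\overline{yM_{+}H_{+}}\subseteq\overline{yH_{+}}$ — in particular for all $y\in(M^{\prime})_{+}$; taking $y=W_{0}^{k}$ shows $x$ leaves every $\bigoplus_{n\geqslant k}H_{n}$ invariant, i.e. $x$ is lower triangular for the grading; (b) since $w_{0}$ commutes with $M_{+}$, the unitary $w_{0}\colon H_{+}\to\bigoplus_{n\geqslant1}H_{n}$ intertwines $M_{+}$ with its restriction to $\bigoplus_{n\geqslant1}H_{n}$ and carries $x|_{\bigoplus_{n\geqslant1}H_{n}}$ (which again belongs to $algLat$ of the restricted algebra) back into $algLat(M_{+})$. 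Combining (a), (b) and the Beurling-type invariant subspaces $\overline{yH_{+}}$, $y\in(M^{\prime})_{+}$, reduces the identity $xW_{0}=W_{0}x$ to an operator-coefficient version of Sarason's theorem [8]: $W_{0}$ is unitarily equivalent to the unilateral shift $S\otimes I_{H_{0}}$ on $\ell^{2}(\mathbb{Z}_{\geqslant0})\otimes H_{0}$, the commutation of $x$ with $(M^{\prime})_{0}$ absorbs the operator coefficients, and the shift forces the remaining analytic Toeplitz relation. Equivalently, with $\Phi(x)=W_{0}^{\ast}xW_{0}$ one verifies (using $\overline{W_{0}K},\overline{W_{0}^{\ast}K}\in Lat(M_{+})$ for $K\in Lat(M_{+})$) that $\Phi$ maps $algLat(M_{+})$ into itself and that, for lower triangular $x$, $\Phi(x)=x$ is equivalent to $xW_{0}=W_{0}x$, so that the whole theorem reduces to showing that every element of $algLat(M_{+})$ is a fixed point of $\Phi$; closing this self-similar induction is the step I expect to be the main obstacle, everything else being bookkeeping. (Alternatively, after identifying $M_{+}$ with the analytic crossed product of $M_{0}$ by $\mathrm{Ad}(v_{0})$ acting on the standard space, one may deduce reflexivity from the corresponding results for reduced analytic crossed and semicrossed products in [6], [3].)
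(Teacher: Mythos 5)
There is a genuine gap, and you in fact flag it yourself: the crux of the theorem is precisely the commutation $xW_{0}=W_{0}x$ (equivalently, that every $x\in algLat(M_{+})$ is a fixed point of your map $\Phi(x)=W_{0}^{\ast}xW_{0}$), and your proposal never establishes it. The preparatory observations you make are correct but insufficient: commuting with $(M^{\prime})_{0}$ together with lower triangularity (invariance of $\bigoplus_{n\geqslant k}H_{n}$, obtained from the invariant subspaces $\overline{W_{0}^{k}H_{+}}$) does not force commutation with the shift $W_{0}$ --- that implication is exactly the Sarason-type statement one has to prove, and invoking ``an operator-coefficient version of Sarason's theorem'' or a reduction to the crossed/semicrossed product results of [6], [3] is not a proof (the identification of $M_{+}$ with an analytic crossed product is itself not justified in your sketch, and [6] works under additional hypotheses such as finiteness). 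Your first half, the inclusion $((M^{\prime})_{+})^{\prime}\subseteq M_{+}$ with boundedness of $\widehat{x}$ supplied by the unitarity of $w_{0}$, is sound and is essentially the paper's Lemma 9 ii).

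The missing idea, which is how the paper closes exactly this step, is to work with the eigenspaces of the backward shift: for $|\lambda|<1$ set $H_{+}^{\lambda}=\{\zeta\in H_{+}\,|\,S_{w_{0}}^{\ast}\zeta=\lambda\zeta\}$. Since $S_{w_{0}}$ commutes with $M_{+}$ on $H_{+}$, its adjoint commutes with $M_{+}^{\ast}$, so each $H_{+}^{\lambda}$ lies in $Lat(M_{+}^{\ast})$; and $x\in algLat(M_{+})$ gives $x^{\ast}\in algLat(M_{+}^{\ast})$, hence $x^{\ast}H_{+}^{\lambda}\subset H_{+}^{\lambda}$, which trivially yields $x^{\ast}S_{w_{0}}^{\ast}=S_{w_{0}}^{\ast}x^{\ast}$ on each $H_{+}^{\lambda}$. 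The second ingredient is a density statement: the span of the vectors $\sum_{n\geqslant0}\lambda^{n}w_{0}^{n}\xi$, $\xi\in H_{0}$, $|\lambda|<1$ (which lie in $H_{+}^{\lambda}$) is dense in $H_{+}$; the paper proves this by a Vandermonde/interpolation estimate showing that $w_{0}$ (and $(M^{\prime})_{0}$) lie in the norm-closed span of the operators $\sum_{n\geqslant0}\lambda^{n}w_{0}^{n}b$. Density then upgrades the eigenspace commutation to $x^{\ast}S_{w_{0}}^{\ast}=S_{w_{0}}^{\ast}x^{\ast}$ on all of $H_{+}$, i.e.\ $x\in((M^{\prime})_{+})^{\prime}$, and Lemma 9 ii) finishes. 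This eigenvector-plus-density device (the noncommutative analogue of Sarason's reproducing kernels) is what your outline lacks, so as it stands the proposal does not prove the theorem.
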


The proof of this result will be given after a series of lemmas. The next
lemma is a substitute for Lemma 3 for the case when $M_{n_{0}},$ (and
therefore ($M^{\prime})_{n_{0}}),$ contains an unitary operator $v_{0}$
(respectively $w_{0})$.

\begin{lemma}
i) $(M_{+})^{\prime}=(M^{\prime})_{+}$\newline ii) $((M^{\prime})_{+}%
)^{\prime}=M_{+}$
\end{lemma}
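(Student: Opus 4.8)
The plan is to reduce both parts to a structural description of $M_+$ and $(M')_+$ coming from Proposition~7. By Proposition~7 i) and iv), every element of $M_n$ (for $n\geqslant 0$) is of the form $v_0^{k}a$ with $a\in M_0$ when $n=kn_0$, and $M_n=\{0\}$ otherwise; similarly $(M')_n = \{w_0^{k}b : b\in(M')_0\}$ when $n=kn_0$. Since $v_0\in M_{n_0}\subset M_+$ and $w_0\in (M')_{n_0}\subset (M')_+$, and since $v_0, w_0$ are unitaries, the algebra $\vee_{n\geqslant 0}M_n$ is the wo-closed algebra generated by $M_0$ and the single unitary $v_0$, and likewise for $(M')_+$. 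The first thing I would record is the inclusion $(M')_+\subset (M_+)'$ (and $M_+\subset((M')_+)'$), which is immediate because $M$ and $M'$ commute, the $U_z$ commute with everything in sight, and compression to $H_+$ respects the commutation since $p_+$ commutes with $M_0$ and with $v_0^{k}(\cdot)$ in the appropriate graded sense; I would check that $p_+ v_0 p_+$ and $p_+ w_0' p_+$ commute on $H_+$ by a direct computation on the dense subspace $\sum_n H_n$ using that $v_0$ shifts $H_n$ to $H_{n+n_0}$ and $w_0$ does the same.

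For the reverse inclusion in ii), I would take $x\in((M')_+)'\subset B(H_+)$ and build, exactly as in the proof of Lemma~3, an operator $\widehat x$ on the dense subspace $H'=\{\xi\in H : P^H_k(\xi)\in (M')_k\xi_0 \text{ for all }k,\ P^H_k(\xi)=0 \text{ for all but finitely many }k\}$ by $\widehat x(m'_k\xi_0)=m'_k x\xi_0$. It is well defined because $\xi_0$ separates $M'$ and $x$ commutes with the compressions of $(M')_k$ for $k\geqslant 0$. The new point compared with Lemma~3 is boundedness: the Arveson spectrum need not be finite, so the argument "$\|\widehat x\|\leqslant\|x\|$ on each $H_k$ hence on $H$" no longer closes by finiteness. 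Here is where the unitary $w_0$ saves the day: every $m'_k\in (M')_k$ with $k=jn_0\geqslant 0$ factors as $m'_k=w_0^{j}b$ with $b\in(M')_0$, and $w_0$ is a unitary in $(M')_+$ commuting with $x$, so $\widehat x(w_0^{j}b\xi_0)=w_0^{j}b\,x\xi_0=w_0^{j}x(b\xi_0)$, giving $\|\widehat x(m'_k\xi_0)\|=\|x(b\xi_0)\|\leqslant\|x\|\,\|b\xi_0\|=\|x\|\,\|m'_k\xi_0\|$ uniformly in $k$; for $k<0$ one handles $H_k$ symmetrically or notes $\widehat x$ is defined from $(M')_k\xi_0$ the same way. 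Hence $\widehat x$ extends to a bounded operator on $H$ with $\|\widehat x\|\leqslant\|x\|$. Then the intertwining computation $m'_n\widehat x(m'_k\xi_0)=m'_n m'_k x\xi_0=\widehat x(m'_n m'_k\xi_0)$ for all $n,k\in\mathbb{Z}$ shows $\widehat x\in (M')'=M$, and since $x=p_+\widehat x p_+$ we get $x\in M_+$.

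Part i) should then follow from ii) by a symmetric argument: applying ii) with the roles of $M$ and $M'$ exchanged (legitimate because the whole setup of Section~2 is symmetric in $M$ and $M'$ — $\xi_0$ is cyclic and separating for both, $sp(\alpha)=sp(\alpha')$, and $M'_{n_0}$ contains the unitary $w_0$), one obtains $((M_+)')' = (M')_+$ applied to $M'$, i.e. $(M'')_+ = (M_+)'$-type relations; more directly, I would run the same $\widehat{\,\cdot\,}$-construction with an $x\in (M_+)'$, producing $\widehat x\in M'$ and hence $x\in (M')_+$, which together with the easy inclusion $(M')_+\subset(M_+)'$ gives equality. The main obstacle, and the only place the hypothesis is genuinely used, is the uniform boundedness of $\widehat x$ across the infinitely many spectral levels $H_{kn_0}$; the unitarity of $v_0$ (equivalently $w_0$) is exactly what converts the level-$k$ estimate into a level-independent one, replacing the finiteness argument of Lemma~3. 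I would also take care to note that the graded factorizations $m'_{jn_0}=w_0^{j}b$ are compatible with compression to $H_+$ so that the algebra $(M')_+$ is indeed $\{p_+ w_0^{j}b\,p_+ : j\geqslant 0,\ b\in(M')_0\}^{-wo}$, which is what makes "commutes with $(M')_+$" equivalent to "commutes with $w_0$ and with $(M')_0$" on $H_+$.
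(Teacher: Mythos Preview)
Your overall strategy is the same as the paper's: prove the easy inclusion, then for the reverse inclusion extend $x$ to an operator $\widehat{x}$ on all of $H$, show $\widehat{x}$ lies in the appropriate von Neumann algebra, and compress back. You do ii) first and derive i) by the $M\leftrightarrow M'$ symmetry, while the paper does i) first and derives ii); this is harmless. The real difference is in the \emph{definition} of the extension: you mimic Lemma~3 and set $\widehat{x}(m'_k\xi_0)=m'_k\,x\xi_0$ level by level, whereas the paper sets $\widehat{x}\bigl((w_0^*)^n\xi\bigr)=(w_0^*)^n x\xi$ for $\xi\in H_+$, i.e.\ ``shift into $H_+$, apply $x$, shift back''.

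This difference matters for boundedness, and that is where your argument has a genuine gap. You establish $\|\widehat{x}(m'_k\xi_0)\|\leq\|x\|\,\|m'_k\xi_0\|$ for each $k$ and conclude $\|\widehat{x}\|\leq\|x\|$. But $\widehat{x}$ does \emph{not} respect the orthogonal decomposition $H=\bigoplus_k H_k$: since $x\xi_0\in H_+$ is typically spread over many $H_j$, one has $\widehat{x}(m'_k\xi_0)=m'_k x\xi_0\in\sum_{j\geq k}H_j$, not $H_k$ alone. A uniform bound on each $H_k$ therefore does not by itself control $\|\widehat{x}\eta\|$ for a finite sum $\eta=\sum_k m'_k\xi_0$. (This is precisely the issue that finiteness of $sp(\alpha)$ patched over in Lemma~3, and which you correctly flag as the obstacle---but your proposed replacement does not close it.)

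The fix is exactly the paper's formulation. For such a finite sum $\eta$, choose $N$ with $w_0^{N}\eta\in H_+$; then your definition gives $\widehat{x}\eta=(w_0^*)^{N}\widehat{x}(w_0^{N}\eta)=(w_0^*)^{N}x(w_0^{N}\eta)$, because on $H_+$ your $\widehat{x}$ agrees with $x$. Hence $\|\widehat{x}\eta\|=\|x(w_0^{N}\eta)\|\leq\|x\|\,\|w_0^{N}\eta\|=\|x\|\,\|\eta\|$. Once you insert this step, your argument and the paper's coincide; the intertwining computation and the symmetry reduction for part i) are fine as written.
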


\begin{proof}
i) Let $x\in(M^{\prime})_{n}$ and $m\in M_{k},n,k\geqslant0$\ . Then
$p_{+}xp_{+}mp_{+}=p_{+}xmp_{+}=p_{+}mxp_{+}=p_{+}mp_{+}xp_{+},$ so
$(M^{\prime})_{+}\subseteq(M_{+})^{\prime}.$ Let now $x\in(M_{+})^{\prime
}\subset B(H_{+}).$ Consider the following linear subspace\ of $H$
\[
H^{\prime}=\left\{  \xi\in H|P_{n}^{H}(\xi)\in M_{n}\xi_{0},n\in%
\mathbb{Z}
\text{ and }P_{n}^{H}(\xi)=0\text{ for all but finitely many }n\right\}
\]
Using Proposition 2 iii) and Proposition 7 iv), it follows that $H^{\prime}%
$\ is a dense subspace of $H.$\ Let $\xi\in H^{\prime}\cap H_{+}.$ Define the
following operator, $\widehat{x}$ on $H^{\prime}$%
\[
\widehat{x}((w_{0}^{\ast})^{n}\xi)=(w_{0}^{\ast})^{n}x\xi,n>0
\]
Then, $\widehat{x}$ is well defined. Indeed if $\xi\in H_{+}$ and $\xi
^{\prime}\in H_{+}$\ are such that $(w_{0}^{\ast})^{n}\xi=(w_{0}^{\ast}%
)^{k}\xi^{\prime}$ for $n,k\in%
\mathbb{Z}
,$ say $n\leq k,$ then, $(w_{0}^{\ast})^{n-k}\xi=(w_{0})^{k-n}\xi=\xi^{\prime
}.$ Since $(w_{0})^{k-n}\in M_{+},$ and $x$\ commutes with $M_{+}$\ we have
\[
(w_{0}^{\ast})^{n-k}x\xi=(w_{0})^{k-n}x(\xi)=x(w_{0})^{k-n}\xi=x\xi^{\prime}%
\]
Therefore,%
\[
(w_{0}^{\ast})^{n}x\xi=(w_{0}^{\ast})^{k}x\xi^{\prime}%
\]
so
\[
\widehat{x}((w_{0}^{\ast})^{n}\xi)=\widehat{x}((w_{0}^{\ast})^{k}\xi^{\prime
})
\]
Next we prove that the operator $\widehat{x}$ is bounded. Indeed%
\[
\left\Vert \widehat{x}((w_{0}^{\ast})^{n}\xi)\right\Vert =\left\Vert
(w_{0}^{\ast})^{n}x\xi\right\Vert =\left\Vert x\xi\right\Vert \leq\left\Vert
x\right\Vert \left\Vert \xi\right\Vert =\left\Vert x\right\Vert \left\Vert
(w_{0}^{\ast})^{n}\xi\right\Vert
\]
Since $w_{0}\in M^{\prime},$ it is straightforward to prove that $\widehat
{x}\in M^{\prime}$ and $p_{+}\widehat{x}p_{+}=x.$ Thus $x\in(M^{\prime})_{+}$
and we are done\newline ii) By replacing $M$\ with $M^{\prime}$\ in i), and
using Proposition 2, iii), the result follows
\end{proof}

For every $\lambda\in%
\mathbb{C}
,\left\vert \lambda\right\vert <1,$ let $K_{\lambda}=\left\{  x(\lambda,b)=%
{\displaystyle\sum\limits_{n\geqslant0}}
\lambda^{n}w_{0}^{n}b|b\in(M^{\prime})_{0}\right\}  $ and $K=\overline
{lin\left\{  K_{\lambda}|\lambda\in%
\mathbb{C}
,\left\vert \lambda\right\vert <1\right\}  },$ where the closure is in the
norm topolgy of $M^{\prime}.$

\begin{lemma}
With the abve notations, $(M^{\prime})_{0}\subset K$ and $w_{0}\in K$.
\end{lemma}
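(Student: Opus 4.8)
The plan is to use the norm-convergence of the geometric-type series defining the elements $x(\lambda,b)$ and then pass to the limit $\lambda\to0$. First note that, since $w_{0}\in(M^{\prime})_{n_{0}}$ is unitary, $\|\lambda^{n}w_{0}^{n}b\|=|\lambda|^{n}\|b\|$ for every $b\in(M^{\prime})_{0}$; hence for $|\lambda|<1$ the series $x(\lambda,b)=\sum_{n\geqslant0}\lambda^{n}w_{0}^{n}b$ converges in operator norm, its partial sums lie in $M^{\prime}$, and (as $M^{\prime}$ is norm closed) $x(\lambda,b)\in M^{\prime}$. Thus $K$ is a well-defined closed linear subspace of $M^{\prime}$, in particular closed under finite linear combinations and norm limits. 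Recall also that $I\in(M^{\prime})_{0}$, since $\alpha_{z}^{\prime}(I)=U_{z}IU_{z}^{\ast}=I$.

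Next I would prove $(M^{\prime})_{0}\subseteq K$. Fix $b\in(M^{\prime})_{0}$. The geometric estimate
\[
\|x(\lambda,b)-b\|=\Big\|\sum_{n\geqslant1}\lambda^{n}w_{0}^{n}b\Big\|\leqslant\|b\|\sum_{n\geqslant1}|\lambda|^{n}=\frac{|\lambda|}{1-|\lambda|}\,\|b\|
\]
shows that $x(\lambda,b)\to b$ in norm as $\lambda\to0$. Since each $x(\lambda,b)\in K_{\lambda}\subseteq K$ and $K$ is norm closed, $b\in K$; as $b$ was arbitrary, $(M^{\prime})_{0}\subseteq K$. (One could also invoke directly $b=x(0,b)\in K_{0}$.)

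Finally, for $w_{0}\in K$, I would apply the construction with $b=I$. For $0<|\lambda|<1$ set
\[
y_{\lambda}=\frac{1}{\lambda}\bigl(x(\lambda,I)-I\bigr)=\sum_{n\geqslant1}\lambda^{n-1}w_{0}^{n}=w_{0}+\sum_{n\geqslant2}\lambda^{n-1}w_{0}^{n}.
\]
Since $x(\lambda,I)\in K_{\lambda}\subseteq K$ and $I\in(M^{\prime})_{0}\subseteq K$, the element $y_{\lambda}$, being a linear combination of these two, lies in $K$. The same geometric estimate gives $\|y_{\lambda}-w_{0}\|\leqslant|\lambda|/(1-|\lambda|)\to0$ as $\lambda\to0$, so $w_{0}$ is a norm limit of elements of $K$ and therefore $w_{0}\in K$.

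The argument is essentially routine; the only points requiring a little care are the norm convergence of the series defining $x(\lambda,b)$ — which is precisely where the unitarity of $w_{0}$ and $|\lambda|<1$ enter — and keeping $\lambda\neq0$ while forming $y_{\lambda}$ before taking the limit. I do not anticipate any genuine obstacle in this lemma.
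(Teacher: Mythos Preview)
Your argument is correct and considerably more elementary than the paper's. For $(M')_{0}\subset K$ both proofs simply observe $K_{0}=(M')_{0}$. For $w_{0}\in K$, however, the paper takes a different route: for a fixed $n$ it chooses $\lambda_{0},\ldots,\lambda_{n}$ to be the $(n+1)$-st roots of $\epsilon^{n+1}$ with $0<\epsilon<1$, solves the Vandermonde system $\sum_{i}\mu_{i}\lambda_{i}^{j}=\delta_{jn}$ by Cramer's rule, and then estimates $\bigl\|\sum_{i}\mu_{i}x(\lambda_{i},1)-w_{0}^{n}\bigr\|$ to show it tends to $0$ with $\epsilon$. This yields the stronger conclusion $w_{0}^{n}\in K$ for every $n\in\mathbb{N}$, which is what is actually used in the subsequent density corollary (one needs each $H_{kn_{0}}=\overline{w_{0}^{k}H_{0}}$ to fall inside the closed linear span of the $\widetilde{H}_{+}^{\lambda}$). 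Your difference-quotient trick gives only $w_{0}$ as stated, but it iterates painlessly: once $w_{0}\in K$, the element $\lambda^{-1}(y_{\lambda}-w_{0})\in K$ converges in norm to $w_{0}^{2}$, and by induction $w_{0}^{n}\in K$ for all $n$. So your approach recovers everything the paper needs, with no Vandermonde bookkeeping; the paper's method has the minor advantage of producing each power in a single step rather than inductively.
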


\begin{proof}
Clearly, for $\lambda=0$, we get $K_{0}=(M^{\prime})_{0}\subset K.$ We will
prove now that $w_{0}^{n}\in K$ for every $n\in%
\mathbb{N}
.$ Fix $n\in%
\mathbb{N}
$ and let $0<\epsilon<1.$ Denote by $\lambda_{0},\lambda_{1},...,\lambda_{n}$
the complex roots of the equation $\lambda^{n+1}=\epsilon^{n+1}.$ Let
$\left\{  \mu_{0},\mu_{1},...,\mu_{n}\right\}  $ be the solution of the
following system of linear equations:%
\[%
{\displaystyle\sum\limits_{i=0}^{i=n}}
\mu_{i}\lambda_{i}^{j}=\delta_{jn},j=0,1,...,n
\]
\newline where $\delta_{jn}$\ is the Kronecker symbol. Using Cramer's rule, we
get%
\[
\mu_{i}=\frac{\pm1}{\Pi_{k<i}(\lambda_{k}-\lambda_{i})\Pi_{k>i}(\lambda
_{i}-\lambda_{k})}%
\]
\newline The choices of $\left\{  \lambda_{i}\right\}  $ and $\left\{  \mu
_{i}\right\}  $ imply that $%
{\displaystyle\sum\limits_{i=0}^{n}}
\mu_{i}x(\lambda_{i},1)=w_{0}^{n}+%
{\displaystyle\sum\limits_{j=n+1}^{\infty}}
(%
{\displaystyle\sum\limits_{i=0}^{n}}
\mu_{i}\lambda_{i}^{j})w_{0}^{j}.$ Furthermore, for $i\neq k,k\geqslant1$ we
have
\[
\left\vert \lambda_{i}-\lambda_{k}\right\vert \geqslant\left\vert
\lambda_{k+1}-\lambda_{k}\right\vert =\left\vert \lambda_{k}-\lambda
_{k-1}\right\vert =2\epsilon\sin\frac{\pi}{n+1}%
\]
\newline Therefore%
\[
\left\vert \mu_{i}\right\vert =\frac{1}{2^{n}\epsilon^{n}(\sin\frac{\pi}%
{n+1})^{n}},i=0,1,...,n
\]
\newline Thus
\[
\left\vert
{\displaystyle\sum\limits_{i=0}^{n}}
\mu_{i}\lambda_{i}^{j}\right\vert \leq\frac{\epsilon^{j}}{2^{n}\epsilon
^{n}(\sin\frac{\pi}{n+1})^{n}},j\geqslant n+1
\]
\newline It follows that%
\[
\left\Vert
{\displaystyle\sum\limits_{i=0}^{n}}
\mu_{i}x(\lambda_{i},1)-w_{0}^{n}\right\Vert =\left\Vert
{\displaystyle\sum\limits_{j=n+1}^{\infty}}
(%
{\displaystyle\sum\limits_{i=0}^{n}}
\mu_{i}\lambda_{i}^{j})w_{0}^{j}\right\Vert \leq%
{\displaystyle\sum\limits_{j=n+1}^{\infty}}
\frac{\epsilon^{j}}{2^{n}\epsilon^{n}(\sin\frac{\pi}{n+1})^{n}}=\frac
{\epsilon}{2^{n}(\sin\frac{\pi}{n+1})^{n}(1-\epsilon)}%
\]
\newline Hence $w_{0}\in K$ and we are done.
\end{proof}

For $\lambda\in%
\mathbb{C}
,\left\vert \lambda\right\vert <1,$ denote
\[
\widetilde{H}_{+}^{\lambda}=\left\{
{\displaystyle\sum\limits_{n=0}^{\infty}}
\lambda^{n}w_{0}^{n}\xi|\xi\in H_{0}\right\}
\]

\begin{corollary}
The linear subspace, $lin\left\{  \widetilde{H}_{+}^{\lambda}|\lambda\in%
\mathbb{C}
,\left\vert \lambda\right\vert <1\right\}  ,$ spanned by $\left\{
\widetilde{H}_{+}^{\lambda}|\lambda\in%
\mathbb{C}
,\left\vert \lambda\right\vert <1\right\}  $ is dense in $H_{+}.$
\end{corollary}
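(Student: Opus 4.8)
The plan is to prove the density statement by a routine orthogonality argument that converts the hypothesis into the vanishing of a holomorphic function on the unit disc, using heavily that $w_0$ is unitary. First I would record the geometry of $H_+$ prepared by Proposition 7: since $sp(\alpha)=\{kn_0:k\in\mathbb{Z}\}$ (Proposition 7 v)) and the spaces $H_n$ are mutually orthogonal with $H_n=\overline{M_n\xi_0}$ (Proposition 2 i), ii)), we get the orthogonal decomposition $H_+=\bigoplus_{k\geqslant 0}H_{kn_0}$. Because $w_0\in(M^{\prime})_{n_0}$ is unitary, the grading relation $(M^{\prime})_jH_k\subseteq H_{j+k}$ --- immediate from $\alpha^{\prime}_z(m^{\prime})=U_zm^{\prime}U_z^{*}$ and $U_z\xi=z^{k}\xi$ for $\xi\in H_k$ --- applied to $w_0^{n}$ and to $(w_0^{*})^{n}=(w_0^{n})^{-1}$ shows that $w_0^{n}$ restricts to an isometry of $H_0$ \emph{onto} $H_{nn_0}$. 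In particular, for $|\lambda|<1$ and $\xi\in H_0$ the series $\sum_{n\geqslant 0}\lambda^{n}w_0^{n}\xi$ converges in norm, lies in $H_+$, and has component $\lambda^{k}w_0^{k}\xi$ in $H_{kn_0}$; thus $\widetilde{H}_{+}^{\lambda}\subseteq H_+$ and the statement is meaningful.

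Next I would take $\eta\in H_+$ with $\eta\perp\widetilde{H}_{+}^{\lambda}$ for every $\lambda$ with $|\lambda|<1$, and show $\eta=0$; this is enough, since then the span of the $\widetilde{H}_{+}^{\lambda}$ has trivial orthogonal complement in $H_+$. Write $\eta=\sum_{k\geqslant 0}\eta_k$ with $\eta_k\in H_{kn_0}$ and $\sum_k\|\eta_k\|^2<\infty$, and fix $\xi\in H_0$. Since $w_0^{n}\xi\in H_{nn_0}$ and the $H_{kn_0}$ are mutually orthogonal, continuity of the inner product gives, for every $|\lambda|<1$,
\[
0=\Big\langle\eta,\ \sum_{n\geqslant 0}\lambda^{n}w_0^{n}\xi\Big\rangle=\sum_{n\geqslant 0}\overline{\lambda}^{\,n}\,\langle\eta_n,w_0^{n}\xi\rangle .
\]
The coefficients $a_n=\langle\eta_n,w_0^{n}\xi\rangle$ obey $|a_n|\leqslant\|\eta_n\|\,\|\xi\|$, hence are bounded, so $f(z)=\sum_{n}a_nz^{n}$ is holomorphic on the open unit disc; the displayed identity says $f(\overline{\lambda})=0$ for all $|\lambda|<1$, i.e. $f$ vanishes identically, so $a_n=0$ for every $n$. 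Therefore $\langle\eta_n,w_0^{n}\xi\rangle=0$ for all $n\geqslant 0$ and all $\xi\in H_0$; since $w_0^{n}H_0=H_{nn_0}$ this forces $\eta_n\perp H_{nn_0}$, and as $\eta_n\in H_{nn_0}$ we obtain $\eta_n=0$ for every $n$, whence $\eta=0$.

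There is no serious obstacle here; the only point needing care is the surjectivity of $w_0^{n}\colon H_0\to H_{nn_0}$ together with the fact that no homogeneous subspaces other than the $H_{kn_0}$ occur, which is precisely what Propositions 2 and 7 supply. As an alternative staying closer to the apparatus just built, one can avoid the power-series step entirely: the Cramer's-rule estimate in the proof of Lemma 11 applies verbatim with an arbitrary $b\in(M^{\prime})_0$ in place of $1$, giving $w_0^{k}b\in K$ for all $k\geqslant 0$; since $v\mapsto v\xi_0$ is norm-continuous on $M^{\prime}$ and $K_\lambda\xi_0\subseteq\widetilde{H}_{+}^{\lambda}$, this yields $w_0^{k}(M^{\prime})_0\xi_0\subseteq\overline{\operatorname{lin}\{\widetilde{H}_{+}^{\lambda}:|\lambda|<1\}}$, and the span of these sets over $k\geqslant 0$ is dense in $H_+$ because $H_{kn_0}=\overline{(M^{\prime})_{kn_0}\xi_0}=\overline{w_0^{k}(M^{\prime})_0\xi_0}$.
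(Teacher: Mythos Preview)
Your primary argument is correct and takes a genuinely different route from the paper. The paper's own proof simply invokes the preceding lemma (the Cramer's-rule estimate showing $w_0^{n}\in K$ for every $n$), then uses $H_{0}=\overline{(M^{\prime})_{0}\xi_{0}}$ and $H_{kn_{0}}=\overline{w_{0}^{k}H_{0}}$ to conclude --- this is essentially the alternative you sketch in your last paragraph. Your main approach, by contrast, bypasses that lemma entirely: pairing an arbitrary $\eta\in H_{+}$ against the vectors in $\widetilde{H}_{+}^{\lambda}$ produces a power series vanishing identically on the open disc, and the identity theorem together with the surjectivity $w_{0}^{n}H_{0}=H_{nn_{0}}$ finishes the job. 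This is more elementary, and since the Cramer's-rule lemma is used in the paper solely to derive this Corollary, your argument would render that lemma unnecessary for the proof of Theorem~8. What the paper's route buys is a quantitative, operator-level approximation of $w_{0}^{n}$ by explicit finite combinations of the $x(\lambda_{i},1)$, a stronger statement than mere density of vectors; but for the reflexivity theorem only the density conclusion is actually used.
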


\begin{proof}
By Proposition 2 ii), $H_{0}=\overline{(M^{\prime})_{0}\xi_{0}}$ and
therefore, by Proposition 7 iv), $H_{kn_{0}}=\overline{w_{0}^{k}H_{0}}.$ The
result follows from the above lemma
\end{proof}

We can give now the

\begin{proof}
of Theorem 8. By Proposition 2 iii), ($M^{\prime})_{0}p_{+}\subset B(H_{+}).$
Since $v_{0}\in M_{n_{0}}\subset M$, it follows that $v_{0}e=ev_{0}$ for every
projection $e\in$($M^{\prime})_{0}$. Therefore, for every projection $e\in
$($M^{\prime})_{0},$ $eH_{+}\in Lat(M_{+}).$ Let $x\in algLat(M_{+})$. Then,
in particular, $xe=ex$ for every projection $e\in$($M^{\prime})_{0},$ hence,
$x^{\ast}e=ex^{\ast}$ for every projection $e\in$($M^{\prime})_{0},$ where
$x^{\ast}\ $is the adjoint of $x$ in $B(H_{+}).$ It is clear that $x^{\ast}\in
algLat(M_{+}^{\ast}),$ where $M_{+}^{\ast}\subset B(H_{+})$\ is the algebra of
the adjoint elements of $M_{+}\subset B(H_{+}).$ On the other hand, since
$v_{0}e=ev_{0}$ for all projections $e\in$($M^{\prime})_{0},$ we have that
$v_{0}^{\ast}e=ev_{0}^{\ast}$ for $e\in$($M^{\prime})_{0}$ and $v_{0}^{\ast
}\in M_{-n_{0}}\subset B(H).$ If we denote by $S_{v_{0}}$ the operator $v_{0}%
$\ on $H_{+},$\ we have%
\begin{align*}
S_{v_{0}}^{\ast}(\xi)  &  =0\text{ if }\xi\in H_{0}\text{ and}\\
S_{v_{0}}^{\ast}(\xi)  &  =v_{0}^{\ast}\xi\text{ if }\xi\in H_{n},n>0
\end{align*}
\newline It follows that $S_{v_{0}}^{\ast}e=eS_{v_{0}}^{\ast}$ for all $e\in
$($M^{\prime})_{0}.$ Hence $eH_{+}\in Lat(M_{+}^{\ast})$ for all $e\in
$($M^{\prime})_{0}.$ On the other hand, by the definition of $w_{0}$, we have
$v_{0}w_{0}=w_{0}v_{0}$. Thus%
\[
S_{v_{0}}^{\ast}S_{w_{0}}^{\ast}=S_{w_{0}}^{\ast}S_{v_{0}}^{\ast}\text{ and
}aS_{w_{0}}^{\ast}=S_{w_{0}}^{\ast}a,a\in M_{0}%
\]
If we denote%
\[
H_{+}^{\lambda}=\left\{  \zeta\in H_{+}|S_{w_{0}}^{\ast}\zeta=\lambda
\zeta\right\}
\]
it is clear that
\[
\widetilde{H}_{+}^{\lambda}\subset H_{+}^{\lambda}\text{ for all }\lambda\in%
\mathbb{C}
,\left\vert \lambda\right\vert <1
\]
Since for every $\lambda\in%
\mathbb{C}
,\left\vert \lambda\right\vert <1,$ we have $H_{+}^{\lambda}\in Lat(M_{+}%
^{\ast}),$ where $H_{+}^{\lambda}$ denotes, as above, the closed subspace of
all eigenvectors of $S_{w_{0}}^{\ast}$ corresponding to the eigenvalue
$\lambda,$\ and $x^{\ast}\in algLat(M_{+}^{\ast}),$\ it follows that $x^{\ast
}H_{+}^{\lambda}\subset H_{+}^{\lambda},$ for all $\lambda\in%
\mathbb{C}
,\left\vert \lambda\right\vert <1.$ We will prove next that
\[
x^{\ast}S_{w_{0}}^{\ast}=S_{w_{0}}^{\ast}x^{\ast}%
\]
and then apply Lemma 9 to conclude that $x\in M_{+}.$ Since $x^{\ast}%
H_{+}^{\lambda}\subset H_{+}^{\lambda},$ it immediately follows that $x^{\ast
}S_{w_{0}}^{\ast}=S_{w_{0}}^{\ast}x^{\ast}$ on $H_{+}^{\lambda}.$ By Corollary
12, the linear span of $\left\{  \widetilde{H}_{+}^{\lambda}|\lambda\in%
\mathbb{C}
,\left\vert \lambda\right\vert <1\right\}  $ is dense in $H_{+}.$ Since
$\widetilde{H}_{+}^{\lambda}\subset H_{+}^{\lambda},$ we have that the linear
span of $\left\{  H_{+}^{\lambda}|\lambda\in%
\mathbb{C}
,\left\vert \lambda\right\vert <1\right\}  $ is dense in $H_{+}.$ Therefore
$x^{\ast}S_{w_{0}}^{\ast}=S_{w_{0}}^{\ast}x^{\ast}$ on $H_{+}.$ Hence
$x\in((M^{\prime})_{+})^{\prime}.$ From Lemma 9 ii) it follows that $x\in
M_{+}$ and we are done.
\end{proof}

\begin{remark}
Let $A$ be a weakly closed algebra of operators on the Hilbert space $X.$
Suppose that there exists a family of mutually orthogonal projections
$\left\{  q_{\iota}\right\}  \subset A$ such that:\newline i) $q_{\iota}$
commutes with $A$ for every $\iota$\newline ii) $q_{\iota}Aq_{\iota}\subset
B(q_{\iota}X)$ is reflexive for ever $\iota$ and\newline iii) $%
{\displaystyle\sum}
q_{\iota}=I_{X}$ where $I_{X}$\ is the identity of $B(X)$\newline Then
$A\subset B(X)$\ is reflexive.
\end{remark}

\begin{proof}
Let $x\in algLat(A).$ Since $q_{\iota},1-q_{\iota}\in Lat(A),$ we have
$xq_{\iota}=q_{\iota}xq_{\iota}\in algLat(q_{\iota}Aq_{\iota}).$ The
reflexivity of $q_{\iota}Aq_{\iota}$\ implies that $xq_{\iota}\in q_{\iota
}Aq_{\iota}=Aq_{\iota}$ for all $\iota.$ Therefore $x=%
{\displaystyle\sum}
xq_{\iota}\in A.$
\end{proof}

The above techniques allow us to prove the following

\begin{theorem}
Let ($M,\boldsymbol{T},\alpha)$ be a $W^{\ast}-$dynamical system with
$M$\ abelian in the standard form considered above. Then $M_{+}$\ is reflexive.
\end{theorem}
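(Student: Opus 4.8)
The plan is to reduce the theorem to Theorem 8 by cutting $M$ down by central projections and reassembling the pieces with Remark 14. Since $M$ is abelian, $M_0$ lies in the centre of $M$, so every projection $e\in M_0$ commutes with each $M_n$, hence with $\vee_{n\geqslant0}M_n$, and (being $\alpha$-fixed) with the $U_z$ and with $p_+$. Consequently $e|_{H_+}=p_+ep_+$ is a projection in $M_+$ that commutes with $M_+$, and the corner $(e|_{H_+})M_+(e|_{H_+})$ on $eH_+$ is canonically identified with the analytic algebra $(eM)_+$ of the reduced $W^\ast$-dynamical system $(eM,\boldsymbol{T},\alpha|_{eM})$: indeed $e\xi_0$ is cyclic and separating for $eM$ on $eH=\overline{eM\xi_0}$, the state $\varphi$ restricts (after normalisation) to a faithful normal $\alpha$-invariant state of $eM$, the corner $eM$ again has separable predual, and $(eM)_n=eM_n$. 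So it suffices to produce mutually orthogonal projections in $M_0$ summing to $1$ on whose corners $M_+$ is reflexive.

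If $sp(\alpha)=\{0\}$ the action is trivial and $M_+=M$ is reflexive, being a von Neumann algebra; so assume $sp(\alpha)\neq\{0\}$. The key cutting step is: let $e\in M_0$ be a nonzero projection with $\alpha|_{eM}$ nontrivial, and let $n\geqslant1$ be the least positive element of $sp(\alpha|_{eM})$ (it exists, since $sp(\alpha|_{eM})$ is symmetric and contains a nonzero integer). Pick $0\neq m\in(eM)_n=eM_n$ and take its polar decomposition $m=w|m|$ in the abelian algebra $M$; then $|m|\in M_0$, and $w$ is a partial isometry with $w\in M_n$ whose support $e_0:=w^\ast w=ww^\ast$ (the two coincide because $M$ is abelian) lies in $M_0$ and satisfies $e_0\leqslant e$. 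Thus $w$ is a unitary in $(e_0M)_n$, and since $sp(\alpha|_{e_0M})\subseteq sp(\alpha|_{eM})$ while $(e_0M)_n\ni w\neq0$, the integer $n$ is the least positive element of $sp(\alpha|_{e_0M})$. Hence Theorem 8 applies to $(e_0M,\boldsymbol{T},\alpha|_{e_0M})$ and $(e_0M)_+$ is reflexive.

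Now I would run an exhaustion argument. Consider the set of all families $\{(e_\iota,n_\iota)\}$ with the $e_\iota\in M_0$ mutually orthogonal nonzero projections, $n_\iota\geqslant1$, such that $n_\iota$ is the least positive element of $sp(\alpha|_{e_\iota M})$ and $(e_\iota M)_{n_\iota}$ contains a unitary; order by inclusion. Chains have upper bounds (their unions), so Zorn's lemma gives a maximal family $\{(e_\iota,n_\iota)\}_{\iota\in I}$. Put $e=\sum_\iota e_\iota$ and $f=1-e$. If $\alpha|_{fM}$ were nontrivial, the cutting step applied with $e:=f$ would produce a nonzero $e_0\leqslant f$ in $M_0$ and an integer $n$ making $(e_0,n)$ admissible; since $e_0\perp e_\iota$ for every $\iota$, this strictly enlarges the family, contradicting maximality. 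Hence $\alpha|_{fM}$ is trivial, $fM=(fM)_0$, and $(fM)_+=fM$ is reflexive, being a von Neumann algebra. Finally, apply Remark 14 to the weakly closed algebra $M_+\subset B(H_+)$ with the family of mutually orthogonal central projections $\{e_\iota|_{H_+}\}_{\iota\in I}\cup\{f|_{H_+}\}$: it sums to $I_{H_+}$, each corner $(e_\iota|_{H_+})M_+(e_\iota|_{H_+})=(e_\iota M)_+$ is reflexive by Theorem 8 together with the cutting step, and $(f|_{H_+})M_+(f|_{H_+})=fM$ is reflexive; therefore $M_+$ is reflexive.

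The routine parts are the identification of a corner $(e|_{H_+})M_+(e|_{H_+})$ with the analytic algebra of the reduced system, and the inclusion $sp(\alpha|_{eM})\subseteq sp(\alpha)$ together with the matching of least positive spectral values. The step demanding genuine care is the maximality argument: one must be certain that what remains after removing all the "unitary" corners is precisely a piece carrying the trivial action, and one must check that each corner $(e_\iota M,\boldsymbol{T},\alpha|_{e_\iota M})$ still satisfies all the standing hypotheses of Section 2 (separable predual, standard form, faithful normal $\alpha$-invariant state) so that Theorem 8 legitimately applies to it.
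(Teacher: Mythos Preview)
Your proof is correct and follows essentially the same strategy as the paper: cut $M$ by central projections in $M_0$ arising from polar decompositions of elements of the spectral subspaces (using abelianness to get $w^\ast w=ww^\ast$), apply Theorem~8 on each corner, show the remaining corner carries the trivial action so that $(fM)_+=fM_0$ is a von Neumann algebra, and reassemble via the Remark (numbered 12 in the paper, not 14). The only difference is organisational---you run a single Zorn's-lemma exhaustion over pairs $(e_\iota,n_\iota)$ with $n_\iota$ varying, whereas the paper proceeds by a countable induction that kills off the spectral indices $n_0,\,n_0+1,\,\ldots$ one level at a time---but the mathematical content is the same.
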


\begin{proof}
Let $n_{0}=\min\left\{  n\in sp(\alpha)|n>0\right\}  .$ Let $v\in M_{n_{0}}$
be a partial isometry and $e=v^{\ast}v=vv^{\ast}\in M_{0}.$ Then, $ve$\ is an
unitary element of $eM_{n_{0}}e\subset B(eH_{+}).$ By Theorem 8, we have that
$eM_{+}e$ is a reflexive subalgebra of $B(eH_{+}).$ If $(1-e)M_{n_{0}%
}(1-e)\neq\left\{  0\right\}  ,$ then there is a partial isometry $u\in
M_{n_{0}}$ such that $u^{\ast}u=uu^{\ast}\leq1-e.$\ Let $\left\{  e_{\gamma
}\right\}  _{\gamma\in\Gamma_{1}}\subset M_{0}$ be a maximal family of
orthogonal projections such that for every $\gamma\in\Gamma_{1},$ $e_{\gamma
}=v_{\gamma}v_{\gamma}^{\ast}=v_{\gamma}^{\ast}v_{\gamma}$ for some partial
isometry $v_{\gamma}\in M_{n_{0}}.$ Set $f_{1}=%
{\displaystyle\sum\nolimits_{\gamma\in\Gamma_{1}}}
e_{\gamma}.$ Then,  by Remark 12,  $f_{1}M_{+}f_{1}$ is a reflexive subalgebra
of $B(f_{1}H_{+})$. Clearly,%
\[
(1-f_{1})M_{n_{0}}(1-f_{1})=\left\{  0\right\}
\]

Suppose, by induction, that for every $k\in%
\mathbb{N}
,k\geqslant2$\ there is a projection $f_{k}\in M_{0},f_{k}\leq1-\sum
_{j=1}^{k-1}f_{j},$ such that
\[
f_{k}M_{+}f_{k}\text{ is a reflexive subalgebra of }B(f_{k}H_{+})\text{ and}%
\]%
\[
(1-\sum_{j=1}^{k}f_{j})M_{i}(1-\sum_{j=1}^{k}f_{j})=\left\{  0\right\}  \text{
for all }i,n_{0}\leq i<n_{0}+k
\]
Then, as in the step $k=1$, we can find $f_{k+1}\leq1-%
{\displaystyle\sum\nolimits_{i=1}^{i=k}}
f_{i}$ such that%
\[
f_{k+1}M_{+}f_{k+1}\text{ is a reflexive subalgebra of }B(f_{k+1}H_{+})\text{
and}%
\]%
\[
(1-\sum_{j=1}^{k+1}f_{j})M_{i}(1-\sum_{j=1}^{k+1}f_{j})=\left\{  0\right\}
\text{ for all }i,n_{0}\leq i<n_{0}+k+1
\]
Let $f=%
{\displaystyle\sum\nolimits_{j\geqslant1}}
f_{j}.$ Then, applying again Remark 12 it follows that $fM_{+}f$ is a
reflexive subalgebra of $B(fH_{+})$ and ($1-f)M_{i}(1-f)=\left\{  0\right\}  $
for all $i\in%
\mathbb{N}
.$ Therefore,%
\[
(1-f)M_{+}(1-f)=(1-f)M_{0}(1-f)
\]
which is a von Neumann subalgebra of $B((1-f)H_{+})$ and is ttherefore
reflexive. Thus, by Remark 12 we have that%
\[
M_{+}=fM_{+}\oplus(1-f)M_{+}%
\]
is reflexive
\end{proof}

\bigskip

\ \ \ \ \ \ \ \ \ \ \ \ \ \ \ \ \ \ \ \ \ \ \ \ \ \ \ \ \ \textbf{REFERENCES}

\bigskip

1. W. B. Arveson, The harmonic analysis of automorphism groups, operator
algebras and applications, Proc. Sympos. Pure Math., vol. 38, Amer. Math.
Soc., Providence, R. I., 1982.

2. R. V. Kadison and J. R. Ringrose, Fundamentals of the theory of operator
algebras, Vol. II Advanced Theory,Academic Press 1986.

3. E. T. A. Kakariadis, Semicrossed products and reflexivity, J. Operator
Theory, 67(2012), 379-395.

4. F Merlevede, C, Peligrad and M. Peligrad,
Reflexive\ Operator\ Algebras\ on\ Banach\ Spaces, Pacific J. Math., 267
(2014), 451-464, DOI: 10.2140/pjm.2014.267.451.

5. G. K. Pedersen, C*-algebras and their automorphism groups, Academic Press 1979.

6. C. Peligrad, Reflexive operator algebras on noncommutative Hardy spaces,
Math. Annalen, 253(1980), 165-175.

7. H. Radjavi and P. Rosenthal,\ Invariant subspaces, 2nd edition, Dover
Publications, Mineola, New York, 2003\textit{.}

8. D. Sarason, Invariant subspaces and unstarred operator algebras, Pacific J.
Math., 17(1966), 511-517.

\end{document}